
\documentclass[12pt,twoside,reqno]{amsart}
\usepackage{amssymb}
\usepackage{amsfonts}
\usepackage{amsmath}
\usepackage{amsthm}
\usepackage[dvips,bottom=1.4in,right=1in,top=1in, left=1in]{geometry}

\setcounter{MaxMatrixCols}{10}

\newtheorem{theorem}{Theorem}[section]

\newtheorem{corollary}[theorem]{Corollary}

\newtheorem{definition}[theorem]{Definition}
\newtheorem{example}[theorem]{Example}

\newtheorem{lemma}[theorem]{Lemma}

\newtheorem{proposition}[theorem]{Proposition}
\newtheorem{remark}[theorem]{Remark}

\numberwithin{equation}{section}
\input{tcilatex}
\subjclass[2000]{35J20, 35J25, 35J60, 35J65, 49J27, 52A41.}
\keywords{Laplace-Beltrami operator, global constraints, nonlinear
elliptic boundary value problems at resonance, nonlinear boundary
conditions, Fredholm alternative.}

\begin{document}
\title[Fredholm alternative]{Fredholm alternative, semilinear elliptic
problems, and Wentzell boundary conditions}
\author[]{Ciprian G. Gal}
\author[]{Gisele Ruiz Goldstein}
\author[]{Jerome A. Goldstein}
\author[]{Silvia Romanelli}
\author[Gal et al.]{Mahamadi Warma}
\address{C.~G.~Gal, Department of Mathematics, University of Missouri,
Columbia, MO 65211 (USA).}
\email{\texttt{galc@missouri.edu}}
\address{G.~Ruiz Goldstein and J.~A.~Goldstein, Department of Mathematics
University of Memphis, Memphis, TN 38152 (USA).}
\email{\texttt{ggoldste@memphis.edu}}
\email{\texttt{jgoldste@memphis.edu}}
\address{S.~Romanelli, Universit degli Studi di Bari Via E. Orabona 4
I-70125 Dipartimento di Matematica Bari, (Italy).}
\email{romans@dm.uniba.it}
\address{M.~Warma, University of Puerto Rico, Department of Mathematics (Rio
Piedras Campus), PO Box 23355 San Juan, PR 00931-3355 (USA).}
\email{\texttt{mjwarma@gmail.com, warma@uprrp.edu}}
\maketitle

\begin{abstract}
We give necessary and sufficient conditions for the solvability of some
semilinear elliptic boundary value problems involving the Laplace operator
with linear and nonlinear highest order boundary conditions involving the
Laplace-Beltrami operator.
\end{abstract}



\section{Introduction}

Let $\Omega \subset \mathbf{R}^{N},$ $N\geq 1,$ be a bounded domain with
smooth boundary $\Gamma :=\partial \Omega $. Let $\alpha :\mathbb{R}%
\rightarrow \mathbb{R}$ be a continuous monotone nondecreasing function with
$\alpha \left( 0\right) =0$ and consider the following boundary value
problem:%
\begin{equation}
\begin{cases}
-\Delta u+\alpha \left( u\right) =f & \text{ in }\Omega , \\
\frac{\partial u}{\partial n}=0 & \text{ on }\Gamma ,%
\end{cases}
\label{1.1}
\end{equation}%
where $f\in L^{2}\left( \Omega \right) $ is a given real function, $\frac{%
\partial u}{\partial n}$ denotes the outward normal derivative of $u$ on $%
\Gamma $ and $\Delta $ is the Laplace operator in $\Omega .$ Let us denote
by $\left\vert \Omega \right\vert $ the Lebesgue measure of $\Omega .$ It is
known that a necessary and sufficient condition for the existence of a
solution of (\ref{1.1}) is%
\begin{equation}
\left\vert \Omega \right\vert ^{-1}\int\limits_{\Omega }f\left( x\right)
dx\in \mathcal{R}\left( \alpha \right) .  \label{1.3}
\end{equation}%
Here all the functions are real valued. This result is due to J. Mawhin \cite%
{Ma}. Earlier, Landesman and Lazer \cite{LL} obtained a similar result. This
result lead to an enormous body of literature. Landesman and Lazer showed
that (\ref{1.3}) is a necessary condition, while a sufficient condition is%
\begin{equation}
\left\vert \Omega \right\vert ^{-1}\int\limits_{\Omega }f\left( x\right)
dx\in int(\mathcal{R}\left( \alpha \right)) ,  \label{1.4}
\end{equation}%
where $int\left( I\right) $ denotes the interior of the set $I$. They also
allowed for nonmonotone $\alpha ,$ which was very important for later
developments. Thus for them, $\alpha :\mathbb{R}\rightarrow \mathbb{R}$ is
continuous, $\alpha \left( 0\right) =0,$ and%
\begin{equation}
\alpha \left( -\infty \right) =\lim_{x\rightarrow -\infty }\alpha \left(
x\right) \leq \alpha \left( y\right) \leq \lim_{x\rightarrow +\infty }\alpha
\left( x\right) =\alpha \left( +\infty \right)  \label{nonlinear}
\end{equation}%
for all $y\in \mathbb{R}$. They proved (\ref{1.3})\ is a necessary condition
in this more general context of (\ref{nonlinear}), while (\ref{1.4}) is a
sufficient condition. Prior to Mawhin's work, Brezis and Haraux \cite{BH}
put the \cite{LL} result in an abstract context and found a new, elegant
proof for it. These works led to very much research, including major
contributions by Brezis and Nirenberg \cite{BN} and many others. Brezis and
Haraux worked in the context of subdifferentials of convex functionals on
Hilbert spaces. We will explain the context and the abstract results, used
in proving the assertion connecting (\ref{1.3}) and (\ref{1.4}), in Sections
2 and 4. But here we emphasize again that these results were inspired by the
similar result of Landesman and Lazer \cite{LL} who, in giving necessary and
sufficient conditions on $f$ for the solvability of certain elliptic
problems of the form $Lu+Nu=f$ (with $L$ linear and $N$ nonlinear),
established a sort of "nonlinear Fredholm alternative" for the first time.
When $\alpha \equiv 0$, the above result reduces to%
\begin{equation*}
-\Delta u=f\text{ \ in\ }\Omega ,\text{ }\frac{\partial u}{\partial n}=0,%
\text{ on }\Gamma
\end{equation*}%
has a weak solution if and only if%
\begin{equation*}
\left\langle f,1\right\rangle _{L^{2}(\Omega )}=0,\text{ i.e}.\text{, }%
\int\limits_{\Omega }f\left( x\right) dx=0,
\end{equation*}%
which is exactly the Fredholm alternative since the null space of the
Neumann Laplacian is the constants. Thus, Mawhin's result (based on the work
in \cite{LL}) is an exact nonlinear Fredholm alternative for the nonlinear
problem (\ref{1.1}).

The goal of this paper is to establish similar results (comparable with (\ref%
{1.3}), (\ref{1.4})) for the following boundary value problem with second
order boundary conditions:%
\begin{equation}
\begin{cases}
-\Delta u+\alpha _{1}\left( u\right) =f\left( x\right) & \text{ in }\Omega ,
\\
b\left( x\right) \frac{\partial u}{\partial n}+c\left( x\right) u-qb\left(
x\right) \Delta _{\Gamma }u+\alpha _{2}\left( u\right) =g\left( x\right) &
\text{ on }\Gamma ,%
\end{cases}
\label{1.7}
\end{equation}%
where the functions appearing in (\ref{1.7}) are real and satisfy $b\in
C\left( \Gamma \right) ,$ $b>0,$ $c\in C\left( \Gamma \right) ,$ $c\geq 0$, $%
q$ is a nonnegative constant; $\alpha _{1},$ $\alpha _{2}:\mathbb{R}%
\rightarrow \mathbb{R}$ are continuous and monotone nondecreasing functions,
such that $\alpha _{i}\left( 0\right) =0$. Above, $\Delta _{\Gamma }$ is the
Laplace-Beltrami operator on $\Gamma $, $f\in L^{2}\left( \Omega \right) $
and $g\in L^{2}\left( \Gamma \right) $ are given real functions. Thus, our
emphasis is on the generality of the boundary conditions.

We organize the paper as follows. In Sections 2 and 3, we discuss the
auxiliary linear problems corresponding to (\ref{1.7}), and in Section 4 we
show the existence of weak solutions to (\ref{1.7}) in case certain global
constraints (similar to (\ref{1.3})) hold. In the same section, we will
consider concrete examples as application of our results.\newline

Before we state our main result, we define the notion of weak solutions to %
\eqref{1.7}.




\begin{definition}
\label{def-weak-sol} A function $u\in H^{1}(\Omega )$ is said to be a weak
solution to \eqref{1.7} if $\alpha _{1}(u)\in L^{1}(\Omega ),$ $\alpha
_{2}(tr(u))\in L^{1}(\Gamma )$, $tr\left( u\right) :=u_{\mid \Gamma }\in
H^{1}(\Gamma ),$ if $q>0$, and%
\begin{eqnarray}
\int_{\Omega }fvdx+\int_{\Gamma }gv\frac{dS}{\beta } &=&\int_{\Omega }\nabla
u\cdot \nabla vdx+\int_{\Omega }\alpha _{1}(u)vdx  \notag
\label{eq-weak-solu} \\
&&+\int_{\Gamma }\left( \alpha _{2}(u)v+cuv\right) \frac{dS}{\beta }%
+q\int_{\Gamma }\nabla _{\Gamma }u\cdot \nabla _{\Gamma }vdS,
\end{eqnarray}%
for all $v\in H^{1}(\Omega )\cap C\left( \overline{\Omega }\right) ,$ if $%
q=0 $ and all $v\in H^{1}(\Omega )\cap C\left( \overline{\Omega }\right) $
with $tr(v)\in H^{1}(\Gamma ),$ if $q>0$.
\end{definition}

Our main result is as follows. Let%
\begin{equation}
\lambda _{1}=\int\limits_{\Omega }dx,\text{ }\lambda
_{2}=\int\limits_{\Gamma }\frac{dS}{b},
\end{equation}%
and let $\widetilde{I}$ be the interval%
\begin{equation*}
\widetilde{I}=\lambda _{1}\mathcal{R}\left( \alpha _{1}\right) +\lambda _{2}%
\mathcal{R}\left( \alpha _{2}\right) .
\end{equation*}%
Moreover, for each $i=1,2$, set%
\begin{equation}
L_{i}(t):=\int_{0}^{t}\alpha _{i}(s)ds\text{ and }\Lambda _{i}(t):=\max
\left\{ L_{i}\left( t\right) ,L_{i}\left( -t\right) \right\} ,\text{ for all
}t\in \mathbb{R}\text{.}  \label{def}
\end{equation}

\begin{theorem}
\label{main} Let $c\equiv 0$ and let $\alpha _{i}:\mathbb{R}\rightarrow
\mathbb{R}$ $(i=1,2)$ be continuous, monotone nondecreasing functions such
that $\alpha _{i}\left( 0\right) =0$. If \eqref{1.7} has a weak solution,
then%
\begin{equation}
\int\limits_{\Omega }f\left( x\right) dx+\int\limits_{\Gamma }g\left(
x\right) \frac{dS}{b\left( x\right) }\in \widetilde{I}.  \label{1.8}
\end{equation}%
Conversely, if there exist positive constants $t_{i},$ $C_{i}>0$, such that
the functions $\Lambda _{i}:\mathbb{R}\rightarrow \lbrack 0,+\infty ),$ $%
i=1,2,$ satisfy $\Lambda _{i}(2t)\leq C_{i}\Lambda _{i}(t),\;$for all $t\geq
t_{i}$, and%
\begin{equation}
\int\limits_{\Omega }f\left( x\right) dx+\int\limits_{\Gamma }g\left(
x\right) \frac{dS}{b\left( x\right) }\in int(\widetilde{I}),  \label{1.8bis}
\end{equation}%
then (\ref{1.7}) has a weak solution.
\end{theorem}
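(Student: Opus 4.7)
The proof splits into necessity (easy) and sufficiency (the main content), both handled by variational/convex-analytic methods.

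\textbf{Necessity of \eqref{1.8}.} Test the weak formulation \eqref{eq-weak-solu} with the constant function $v\equiv 1$, which is admissible in both the $q=0$ and $q>0$ cases. Since $\nabla v=0$, $\nabla_\Gamma v=0$ and $c\equiv 0$, the identity collapses to
\[
\int_\Omega f\,dx+\int_\Gamma g\,\frac{dS}{b}=\int_\Omega \alpha_1(u)\,dx+\int_\Gamma \alpha_2(u)\,\frac{dS}{b}.
\]
A mean-value argument using continuity and monotonicity of $\alpha_i$ (so that $\mathcal{R}(\alpha_i)$ is an interval and $\lambda_i^{-1}\!\int\!\alpha_i(u)\in\overline{\mathcal{R}(\alpha_i)}$) places the right-hand side in $\widetilde I$.

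\textbf{Sufficiency: variational setup.} I would recognize \eqref{eq-weak-solu} with $c\equiv 0$ as the Euler--Lagrange identity of the convex functional
\[
\Phi(u):=\tfrac12\!\int_\Omega\!|\nabla u|^2\,dx + \tfrac{q}{2}\!\int_\Gamma\!|\nabla_\Gamma u|^2\,dS + \!\int_\Omega\!L_1(u)\,dx + \!\int_\Gamma\!L_2(u)\tfrac{dS}{b} - \!\int_\Omega\!fu\,dx - \!\int_\Gamma\!gu\tfrac{dS}{b},
\]
defined on the natural Orlicz--Sobolev space associated with $\Lambda_1$ and (when $q>0$) $\Lambda_2$. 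Convexity is immediate from the monotonicity of $\alpha_i$; weak lower semicontinuity follows from Fatou's lemma combined with compact Sobolev embeddings. The $\Delta_2$-condition $\Lambda_i(2t)\le C_i\Lambda_i(t)$ is used both to guarantee reflexivity/separability of this space (so minimizing sequences admit weakly convergent subsequences) and to legitimize the Euler--Lagrange passage under the $L^1$-integrability conventions of Definition~\ref{def-weak-sol}.

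\textbf{Sufficiency: coercivity (the main obstacle).} Write $u=a+w$ with $a=|\Omega|^{-1}\!\int_\Omega\! u$, so that $\int_\Omega w\,dx=0$, and set $D(u)^2:=\|\nabla u\|_{L^2(\Omega)}^2+q\|\nabla_\Gamma u\|_{L^2(\Gamma)}^2$. Poincaré in $\Omega$ together with the trace inequality (and Poincaré on $\Gamma$ when $q>0$) yields $\|w\|_{L^2(\Omega)}+\|w\|_{L^2(\Gamma)}\le C\,D(u)$. The tangent inequality $L_i(a+w)\ge L_i(a)+\alpha_i(a)w$ and $L_i\ge 0$ give
\[
\Phi(u)\ge \tfrac12 D(u)^2+F(a)+\alpha_2(a)\!\int_\Gamma\!\tfrac{w}{b}\,dS-\!\int_\Omega\! fw\,dx-\!\int_\Gamma\! gw\tfrac{dS}{b},
\]
with $F(a):=\lambda_1 L_1(a)+\lambda_2 L_2(a)-a\bigl(\int_\Omega f+\int_\Gamma g/b\bigr)$. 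Since $F'(a)=\lambda_1\alpha_1(a)+\lambda_2\alpha_2(a)-(\int f+\int g/b)$ is nondecreasing, the interiority hypothesis \eqref{1.8bis} forces $F'>0$ for $a\gg 0$ and $F'<0$ for $a\ll 0$, whence $F(a)\to+\infty$ as $|a|\to\infty$. Young's inequality absorbs the linear-in-$w$ terms into $\tfrac12 D(u)^2$, and the $\Delta_2$-estimate $t\alpha_i(t)\le C\Lambda_i(t)$ (a standard consequence of $\Lambda_i(2t)\le C_i\Lambda_i(t)$) controls the residual quantity $|\alpha_2(a)|^2$ at a lower order than $F(a)$. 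Combining these yields $\Phi(u)\to+\infty$ whenever $\|u\|_{H^1}\to+\infty$, so the direct method of the calculus of variations produces a minimizer, which by construction satisfies \eqref{eq-weak-solu}. The delicate step is clearly the coercivity bound, as it is the only one that simultaneously uses the interiority assumption and the $\Delta_2$-growth restriction.
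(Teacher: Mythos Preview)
Your necessity argument is the same as the paper's. For sufficiency, however, the paper follows a completely different route: instead of minimizing the energy $\Phi$ directly, it works in the Hilbert space $\mathbb{X}_2=L^2(\Omega)\oplus L^2(\Gamma,dS/b)$ and invokes the Brezis--Haraux range theorem. One sets $A_1=$ the linear Wentzell Laplacian (with $c\equiv 0$), $A_2U=(\alpha_1(u),\alpha_2(v))^T$, and $A_3=\partial(J_1+J_2)$, and verifies (this is where the $\Delta_2$-hypothesis is used, to make the effective domains vector spaces and to identify $\partial J_2$ with $A_2$) that each $A_i=\partial J_i$ for proper convex l.s.c.\ functionals with $D(J_1)\cap D(J_2)\neq\emptyset$. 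Brezis--Haraux then gives $int(\mathcal R(A_1)+\mathcal R(A_2))\subset\mathcal R(A_3)$. Since the Fredholm alternative yields $\mathcal R(A_1)=\mathbf 1^{\perp}$, one writes $F=(F-C)+C$ with $C=(c_1,c_2)^T$, $c_i\in\mathcal R(\alpha_i)$, chosen so that $ave_\mu(F)=(\lambda_1c_1+\lambda_2c_2)/(\lambda_1+\lambda_2)$; the interiority hypothesis guarantees this can also be done for every $\widetilde F$ in a small $\mathbb X_2$-ball around $F$, whence $F\in int(\mathcal R(A_1)+\mathcal R(A_2))\subset\mathcal R(A_3)$.

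Your variational approach is natural, but the coercivity bound as written has a genuine gap. The assertion that the $\Delta_2$-estimate $t\alpha_i(t)\le C\Lambda_i(t)$ renders $|\alpha_2(a)|^2$ lower order than $F(a)$ is false in general. Take $\alpha_1\equiv 0$ and $\alpha_2(s)=s^3$, so $\Lambda_2(s)=s^4/4$ satisfies the $\Delta_2$-condition; then $F(a)\sim \lambda_2 a^4/4$ while $|\alpha_2(a)|^2=a^6$, and your lower bound $\tfrac14 D(u)^2+F(a)-C|\alpha_2(a)|^2$ tends to $-\infty$ as $|a|\to\infty$. Even $\alpha_2(s)=s$ already gives the two quantities the \emph{same} order. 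The cross term $\alpha_2(a)\int_\Gamma w\,dS/b$ cannot be absorbed by a single application of Young's inequality after linearizing $L_2$ at $a$; to repair the argument one must retain the full convexity of $L_2$ on the boundary (not merely its tangent at $a$), or run a coercivity-by-contradiction argument along a minimizing sequence rather than seek a pointwise lower bound. The Brezis--Haraux route in the paper avoids this difficulty altogether, replacing coercivity by maximal monotonicity of $\partial(J_1+J_2)$, which requires no growth comparison between $\alpha_1$ and $\alpha_2$.
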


\section{The linear problem}

\quad We need to introduce some notation and terminology. We first define
the space $\mathbb{X}_{2}$ to be the real Hilbert space $L^{2}\left( \Omega
,dx\right) \oplus L^{2}(\Gamma ,dS/b),$ with norm
\begin{equation}
\left\Vert u\right\Vert _{\mathbb{X}_{2}}=\left( \int\limits_{\Omega
}\left\vert u\left( x\right) \right\vert ^{2}dx+\int\limits_{\Gamma
}\left\vert u\left( x\right) \right\vert ^{2}\frac{dS_{x}}{b\left( x\right) }%
\right) ^{\frac{1}{2}}  \label{2.1}
\end{equation}%
for $u\in C\left( \overline{\Omega }\right) $, where $dS$ denotes the usual
Lebesgue surface measure on $\Gamma $. Here, if $u\in C\left( \overline{%
\Omega }\right) ,$ we identify $u$ with the vector $U=\left( u|_{\Omega
},u|_{\Gamma }\right) ^{T}\in C\left( \Omega \right) \times C\left( \Gamma
\right) .$ We then note that $\mathbb{X}_{2}=L^{2}\left( \Omega ,dx\right)
\oplus L^{2}(\Gamma ,dS/b)$ is the completion of $C\left( \overline{\Omega }%
\right) $ with respect to the norm $\left( 2.1\right) $. In general, any
vector $U\in \mathbb{X}_{2}$ will be of the form $\left( u_{1},u_{2}\right)
^{T}$ with $u_{1}\in L^{2}\left( \Omega ,dx\right) $ and $u_{2}\in
L^{2}(\Gamma ,dS/b),$ and there need be no connection between $u_{1}$ and $%
u_{2}.$ Here and below the superscript $T$ denotes transpose. Let $%
\left\langle \cdot ,\cdot \right\rangle _{\mathbb{X}_{2}}$ denote the
corresponding inner product on $\mathbb{X}_{2}$. For a complete discussion
of this space, we refer the reader to \cite{FGGR1}$.$

We define the formal operator $A_{0}$ by%
\begin{equation}
A_{0}U=\left( \left( -\Delta u\right) |_{\Omega },\left( -\Delta u\right)
|_{\Gamma }\right) ^{T},  \label{2.2}
\end{equation}%
for functions $U=\left( u|_{\Omega },u|_{\Gamma }\right) ^{T}$ with $u\in
C^{2}\left( \overline{\Omega }\right) $ that satisfy the Wentzell boundary
condition%
\begin{equation}
\,\Delta u+b\left( x\right) \frac{\partial u}{\partial n}+c\left( x\right)
u-qb\left( x\right) \Delta _{\Gamma }u=0,  \label{2.3}
\end{equation}%
on $\Gamma .$ Here $\left( \Delta u\right) _{|_{\Gamma }}$ stands for the
trace of the function $\Delta u$ on the boundary $\Gamma $ and it should not
be confused with the Laplace-Beltrami operator $\Delta _{\Gamma }u$. From
now on, $tr\left( u\right) $ denotes the trace of $u$ on the boundary. We let%
\begin{align}
D\left( A_{0}\right) & =\left\{ U=\left( u_{1},u_{2}\right) ^{T}\in \mathbb{X%
}_{2}:U\text{ corresponds to }u_{1}\in C^{2}\left( \overline{\Omega }\right)
,\right.  \label{2.4} \\
& \left. u_{2}=u_{1}|_{\Gamma }=tr\left( u_{1}\right) \text{ and (\ref{2.3})
holds}\right\} .  \notag
\end{align}%
For functions $u\in C^{2}\left( \overline{\Omega }\right) \subset \mathbb{X}%
_{2}$, $A_{0}U$ is defined by (\ref{2.2}). For any functions $u,v$ belonging
to $C^{2}\left( \overline{\Omega }\right) ,$ and each satisfying the
boundary condition $\Delta \varpi +b\left( x\right) \frac{\partial \varpi }{%
\partial n}+c\left( x\right) \varpi -qb\left( x\right) \Delta _{\Gamma
}\varpi =0$ on $\Gamma ,$ we identify $u$ and $v$ with $U=\left( u|_{\Omega
},u|_{\Gamma }\right) ^{T}$ and $V=\left( v|_{\Omega },v|_{\Gamma }\right)
^{T}$ and calculate $\left\langle A_{0}U,V\right\rangle _{\mathbb{X}_{2}}$
as follows:%
\begin{align}
\left\langle A_{0}U,V\right\rangle _{\mathbb{X}_{2}}& =\int\limits_{\Omega
}\left( -\Delta u\right) vdx+\int\limits_{\Gamma }\left( -\Delta u\right) v%
\frac{dS}{b\left( x\right) }  \label{2.5} \\
& =\int\limits_{\Omega }\nabla u\cdot \nabla vdx+\int\limits_{\Gamma }\left(
-\Delta u-b\left( x\right) \frac{\partial u}{\partial n}\right) v\frac{dS}{%
b\left( x\right) }  \notag \\
& =\int\limits_{\Omega }\nabla u\cdot \nabla vdx+\int\limits_{\Gamma }\left(
c\left( x\right) u-qb\left( x\right) \Delta _{\Gamma }u\right) v\frac{dS}{%
b\left( x\right) },  \notag
\end{align}%
since $-\Delta u-b\left( x\right) \frac{\partial u}{\partial n}=c\left(
x\right) u-qb\left( x\right) \Delta _{\Gamma }u$ on $\Gamma .$ Furthermore,
Stokes' theorem applied in the last term of (\ref{2.5}) yields%
\begin{equation}
\left\langle A_{0}U,V\right\rangle _{\mathbb{X}_{2}}=\int\limits_{\Omega
}\nabla u\cdot \nabla vdx+\int\limits_{\Gamma }c\left( x\right) uv\frac{dS}{%
b\left( x\right) }+q\int\limits_{\Gamma }\nabla _{\Gamma }u\cdot \nabla
_{\Gamma }vdS,  \label{2.6}
\end{equation}%
where $\nabla _{\Gamma }$ stands for the tangential gradient on the surface $%
\Gamma .$ Finally, if we denote the right hand side of (\ref{2.6}) by $%
\varrho \left( U,V\right) $, it is now clear that $\varrho \left( U,V\right)
=\varrho \left( V,U\right) =\left\langle U,A_{0}V\right\rangle _{\mathbb{X}%
_{2}},$ therefore $A_{0}$ is symmetric on $\mathbb{X}_{2}$. Let us now
consider a function $f\in C\left( \overline{\Omega }\right) \cup H^{1}\left(
\Omega \right) $ such that $F=\left( f_{1},f_{2}\right) ^{T}$ with $%
f_{1}:=f|_{\Omega }$ and $f_{2}:=f|_{\Gamma }.$ By the equality $A_{0}U=F,$
we mean the following boundary value problem:%
\begin{equation}
-\Delta u=f_{1}\quad \text{ in }\quad \quad \Omega ,  \label{2.7}
\end{equation}%
\begin{equation}
-\Delta u=f_{2}\,\quad \text{on}\quad \quad \Gamma .  \label{2.8}
\end{equation}%
Using the Wentzell boundary condition (\ref{2.3}) and replacing $f_{2}$ by $%
f_{\mid \Gamma },$ the boundary condition (\ref{2.8}) becomes%
\begin{equation}
b(x)\frac{\partial u}{\partial n}+c(x)u-qb(x)\Delta _{\Gamma }u=f_{2}\text{
on }\Gamma .  \label{2.9}
\end{equation}%
Any $u\in H^{s}\left( \Omega \right) $ has a trace $tr\left( u\right)
=u|_{\Gamma }$ in $H^{s-1/2}\left( \Gamma \right) $ for $s>1/2.$ More
precisely, we recall that the linear map $tr:H^{s}\left( \Omega \right)
\rightarrow H^{s-1/2}\left( \Gamma \right) $ is bounded and onto for $s>1/2$%
. We now define the "Wentzell version of $A_{0}$", $\widetilde{A}_{0},$ by $%
\widetilde{A}_{0}U=F=\left( f_{1},f_{2}\right) ^{T}$ on%
\begin{align}
D(\widetilde{A}_{0})& =\left\{ U\in \mathbb{X}_{2}:U\text{ corresponds to }%
u\in H^{2}\left( \Omega \right) ,\right.  \label{2.10} \\
& \left. tr\left( u\right) \in H^{2}\left( \Gamma \right) \text{ if }q>0%
\text{, and (\ref{2.7}), (\ref{2.9}) holds}\right\} .  \notag
\end{align}%
In this case, $f_{2}$ need not be the trace of $f_{1}$ on $\Gamma .$ Then,
using the techniques as in \cite{FGGR2}, we can easily check that $%
\widetilde{A}_{0}$ is contained in the closure of $A_{0}$. Let $A=\overline{A%
}_{0}=\overline{\widetilde{A}}_{0}$. Then, $A$ is selfadjoint and
nonnegative if $c\geq 0$ on $\Gamma $; $A$ is the operator associated with
the nonnegative symmetric closed bilinear form $\varrho \left( U,V\right) .$
We have $\left\langle AU,V\right\rangle _{\mathbb{X}_{2}}=\varrho \left(
U,V\right) ,$ for all $U\in D\left( A\right) $ and all $V=\left( v|_{\Omega
},v|_{\Gamma }\right) ^{T}\in D(\varrho ):=H^{1}\left( \Omega \right) \times
H^{1}\left( \Gamma \right) $ (if $q>0$) and $V\in D(\varrho ):=H^{1}\left(
\Omega \right) \times H^{1/2}\left( \Gamma \right) $ if $q=0$. We emphasize
that for $A=\overline{A}_{0},$ the equations (\ref{2.7}) and (\ref{2.9})
hold even if the vector $F=\left( f_{1},f_{2}\right) ^{T}$ does not
correspond to a function $f$ belonging to $C\left( \overline{\Omega }\right)
\cup H^{1}\left( \Omega \right) $, that is, $f_{2}\neq f_{1}|_{\Gamma }.$
For $U\in D\left( A\right) ,$ an operator matrix representation of $A$ is
given by%
\begin{equation}
A=\left(
\begin{array}{cc}
-\Delta & \qquad 0 \\
b\frac{\partial }{\partial n} & \qquad cI-qb\Delta _{\Gamma }%
\end{array}%
\right) .  \label{2.12}
\end{equation}

We will now give a concrete example when $q=0$ (that is, $\Delta _{\Gamma }$
does not appear in the boundary condition (\ref{2.9})). This is a simple
example where $f_{2}\neq f_{1}|_{\Gamma }$.

\begin{example}
Let $\Omega =\left( 0,1\right) \subset \mathbb{R}$ and let $F=\left(
0,k\right) $ where $\Gamma =\left\{ 0,1\right\} $ and $k\left( 0\right)
=a_{0},$ $k\left( 1\right) =b_{0}$ with $\left( a_{0},b_{0}\right) \neq
\left( 0,0\right) $. Take $b\left( j\right) =c\left( j\right) =1$ for $%
j=0,1. $ Then $AU=F$ means%
\begin{equation}
\left\{
\begin{array}{c}
u^{^{\prime \prime }}=0\text{ in }\left[ 0,1\right] , \\
-u^{^{\prime }}\left( 0\right) +u\left( 0\right) =a_{0}, \\
u^{^{\prime }}\left( 1\right) +u\left( 1\right) =b_{0},%
\end{array}%
\right.  \label{2.13}
\end{equation}%
since $\partial /\partial n=\left( -1\right) ^{j+1}d/dx$ at $x=j\in \left\{
0,1\right\} $. Solving (\ref{2.13}) gives%
\begin{equation*}
u\left( x\right) =\frac{1}{3}\left[ \left( b_{0}-a_{0}\right) x+\left(
2a_{0}+b_{0}\right) \right] ,\text{ }x\in \left[ 0,1\right] .
\end{equation*}
\end{example}

\section{The domain of the Wentzell Laplacian}

We recall some facts from the theory of linear elliptic boundary value
problems. The standard theory works for uniformly elliptic problems of even
order $2m$; we shall restrict ourselves to the second order case, $m=1$. We
shall treat the symmetric case, although this restriction is not needed for
the results we present in this section. Our problem takes the form $\widehat{%
A}u=f$ in $\Omega ,$ $\widehat{B}u=g$ on $\Gamma $, where $\Omega $ is a
smooth bounded domain in $\mathbb{R}^{N}$ with boundary $\Gamma ,$%
\begin{equation}
\left\{
\begin{array}{c}
Au=-\nabla \cdot \mathcal{A}\left( x\right) \nabla u, \\
Bu=b\partial _{n}^{\mathcal{A}}u+cu-qb\Delta _{\Gamma }u,%
\end{array}%
\right.  \label{3.0}
\end{equation}%
and $\widehat{A}=A+\lambda I,$ $\widehat{B}=B+\lambda I,$ for some $\lambda
\in \mathbb{R}$. As the theory is based upon pseudo differential operator
techniques, we make the standard assumption that $\Omega $, $\mathcal{A}$, $%
b $ and $c$ are all of class $C^{\infty }$ in addition to the assumptions
that the $N\times N$ matrix function $\mathcal{A}$ is real, symmetric and
uniformly positive definite, $b>0$, $c\geq 0$ and $q\in \left[ 0,+\infty
\right) $.

Let $s\in \mathbb{N}_{0}=\left\{ 0,1,2,...\right\} $ and $p\in \left(
1,+\infty \right) $. We refer to Triebel \cite{T} for the general case,
where we use his notation; Lions-Magenes \cite{LM} treats the Hilbert space
case ($p=2$).

\begin{theorem}
\label{t3.1}Let the above assumptions hold, with $q=0$. Then for all $%
\lambda >0$, with $\widehat{A}=A+\lambda I,$ $\widehat{B}=B+\lambda I$, the
map%
\begin{equation*}
\Xi _{\lambda }:u\mapsto (\widehat{A}u,\widehat{B}u),
\end{equation*}%
viewed as a map from $W_{p}^{s+2}\left( \Omega \right) $ to $W_{p}^{2}\left(
\Omega \right) \times B_{p,p}^{1+s-1/p}\left( \Gamma \right) ,$ is an
isomorphism.
\end{theorem}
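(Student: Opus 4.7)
The plan is to realize Theorem \ref{t3.1} as a special case of the classical regular elliptic boundary value problem theory in Besov scales, combined with an elementary energy argument for injectivity.

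First I would verify that $(\widehat{A},\widehat{B})$ is a regular (Lopatinskii--Shapiro) elliptic boundary problem of order $(2,1)$. Uniform ellipticity of $\widehat{A}$ follows immediately from the positive-definiteness of $\mathcal{A}$. Since $q=0$, the principal part of $\widehat{B}$ is the first-order operator $b\,\partial_{n}^{\mathcal{A}}$, and the Lopatinskii condition at each boundary point $x_{0}$ reduces to the non-vanishing of the coefficient $b(x_{0})$ of the transverse derivative, which holds by $b>0$. Triebel's theorem on regular elliptic boundary problems (cf.\ \cite{T}) then applies and yields both the a priori estimate
\begin{equation*}
\|u\|_{W_{p}^{s+2}(\Omega)} \leq C\Bigl( \|\widehat{A}u\|_{W_{p}^{s}(\Omega)} + \|\widehat{B}u\|_{B_{p,p}^{s+1-1/p}(\Gamma)} + \|u\|_{L^{p}(\Omega)}\Bigr),
\end{equation*}
and the Fredholm property of $\Xi_{\lambda}$ with index independent of $\lambda$; a continuous deformation in $\lambda$ to a regime where coerciveness of the associated form produces invertibility directly pins this index to zero.

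Injectivity is the one place where I would argue explicitly. If $u\in W_{p}^{s+2}(\Omega)$ satisfies $\widehat{A}u=0$ in $\Omega$ and $\widehat{B}u=0$ on $\Gamma$, pairing with $u$ and integrating by parts gives
\begin{equation*}
0 = \int_{\Omega}\mathcal{A}\nabla u\cdot\nabla u\,dx + \lambda\int_{\Omega} u^{2}\,dx + \int_{\Gamma}\frac{c+\lambda}{b}\,u^{2}\,dS,
\end{equation*}
where the boundary contribution has been rewritten using $b\,\partial_{n}^{\mathcal{A}}u = -(c+\lambda)u$ on $\Gamma$. Since $\lambda>0$, $c\geq 0$, $b>0$, and $\mathcal{A}$ is positive definite, every integrand is nonnegative and the term $\lambda\int_{\Omega}u^{2}$ forces $u\equiv 0$. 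Combined with Fredholm index zero this gives surjectivity, so $\Xi_{\lambda}$ is a continuous bijection and hence an isomorphism by the open mapping theorem.

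The main obstacle is the first step: invoking Triebel's pseudo-differential calculus requires checking that the order count, the trace space $B_{p,p}^{s+1-1/p}(\Gamma)$, and the smoothness hypotheses on $\Omega$, $\mathcal{A}$, $b$, $c$ all line up precisely with the framework in \cite{T}. In the Hilbert case $p=2$ one could bypass this by working directly with the bilinear form $\varrho$ from Section~2 and applying Lax--Milgram for large $\lambda$, but to reach the full Besov scale Triebel's machinery appears unavoidable.
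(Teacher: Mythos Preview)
Your argument is sound. Both you and the paper ultimately rest on Triebel's machinery for regular elliptic boundary problems, but the packaging differs. The paper treats Theorem~\ref{t3.1} as essentially a citation of \cite{T} (and \cite{LM} for $p=2$), and then \emph{recalls} the classical constructive strategy: solve the constant-coefficient model problem on the half-space by Fourier transform in the tangential variables (reducing to an ODE in the normal direction, selecting the decaying exponential), then localize via a partition of unity and pseudo-differential calculus to pass to the variable-coefficient curved case. No explicit injectivity argument is isolated; uniqueness emerges from the construction.

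You instead take the Fredholm route: verify Lopatinskii--Shapiro directly (correctly observing that for $q=0$ this is just $b>0$), invoke Triebel for the a~priori estimate and Fredholm property, pin the index at zero by homotopy in $\lambda$, and dispose of the kernel with a clean energy identity. This is a legitimate and somewhat more transparent alternative for the user who is willing to take the Fredholm package from \cite{T} as a black box. Two small points to tighten: (i) for $p\neq 2$ and general $s$, your energy pairing is justified once you note that any kernel element is $C^{\infty}(\overline{\Omega})$ by elliptic bootstrapping, so the integration by parts is unproblematic; (ii) your ``coerciveness for large $\lambda$'' step is literally an $L^{2}$ argument---to get index zero in the $W_{p}^{s+2}$ scale you either appeal to the $p$-independence of the index (same symbol class), or run Lax--Milgram at $p=2$ and then use elliptic regularity to climb to the full scale. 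Either fix is routine. The paper's half-space sketch, by contrast, buys a concrete picture of \emph{why} the Lopatinskii condition appears (the choice of the decaying root $\gamma_{2}$ and the single boundary equation determining $C_{2}$), which is useful for the extension to $q>0$ in Theorem~\ref{t3.2}.
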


This means that $\Xi _{\lambda }$ is a linear bijection, and there is a
positive constant $C$, independent of $u$, such that%
\begin{equation}
C^{-1}\left\Vert u\right\Vert _{W_{p}^{s+2}\left( \Omega \right) }\leq
\left\Vert \widehat{A}u\right\Vert _{W_{p}^{2}\left( \Omega \right)
}+\left\Vert \widehat{B}u\right\Vert _{B_{p,p}^{1+s-1/p}\left( \Gamma
\right) }\leq C\left\Vert u\right\Vert _{W_{p}^{s+2}\left( \Omega \right) }
\label{3.0bis}
\end{equation}%
for all $u\in W_{p}^{s+2}\left( \Omega \right) $. Thus, the isomorphism is a
linear homeomorphism, but it not need be isometric. Here $W_{p}^{r}\left(
\Omega \right) $ is Triebel's notation for the Sobolev space and $%
B_{p,p}^{r}\left( \Gamma \right) $ for the Besov space. For $s=0$ and $p=2,$
this reduces $\Xi _{\lambda }$ to being an isomorphism from $H^{2}\left(
\Omega \right) $ to $L^{2}\left( \Omega \right) \oplus L^{2}\left( \Gamma
,dS\right) ,$ which is equivalent to saying that $\Xi _{\lambda }$ is an
isomorphism from $H^{2}\left( \Omega \right) $ to $\mathbb{X}_{2}$, since $%
L^{2}\left( \Gamma ,dS\right) $ and $L^{2}\left( \Gamma ,dS/b\right) $ are
the same sets with equivalent inner products.

It follows that, when $q=0$, the domain of the Wentzell Laplacian $A$ is
exactly $H^{2}\left( \Omega \right) $.

\begin{theorem}
\label{t3.2}Let
\begin{equation*}
H_{\ast }^{2}\left( \Omega \right) =\left\{ u\in H^{2}\left( \Omega \right)
:u_{\mid \Gamma }\in H^{2}\left( \Gamma \right) \right\} .
\end{equation*}%
The domain of the Wentzell Laplacian $A$, the selfadjoint closure of $A_{0}$%
, defined by (\ref{2.7}), (\ref{2.9}), is exactly%
\begin{equation}
D\left( A\right) =\left\{
\begin{array}{cc}
H^{2}\left( \Omega \right) & \text{if }q=0, \\
H_{\ast }^{2}\left( \Omega \right) & \text{if }q>0.%
\end{array}%
\right.  \label{3.1}
\end{equation}%
The same conclusion holds for the closure of the operator $A$ defined by (%
\ref{3.0}).
\end{theorem}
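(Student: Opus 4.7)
The easy inclusion $H^{2}_{\ast}(\Omega)\subseteq D(A)$ (respectively $H^{2}(\Omega)\subseteq D(A)$ when $q=0$) is immediate from the definition \eqref{2.10}: given any $u$ in the indicated Sobolev class, the quantities $-\Delta u\in L^{2}(\Omega)$ and $b\partial_{n}u+cu-qb\Delta_{\Gamma }u\in L^{2}(\Gamma,dS/b)$ are well-defined (using $u|_{\Gamma }\in H^{2}(\Gamma)$ to make sense of $\Delta_{\Gamma }u$ when $q>0$), so $U=(u|_{\Omega },u|_{\Gamma })^{T}$ automatically lies in $D(\widetilde{A}_{0})\subseteq D(A)$. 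The heart of the argument is the reverse inclusion.

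For $q=0$ I would combine Theorem \ref{t3.1} with the selfadjointness of $A$. Fix $\lambda>0$. Since $A$ is a nonnegative selfadjoint operator on $\mathbb{X}_{2}$, the map $A+\lambda I:D(A)\rightarrow \mathbb{X}_{2}$ is a bijection. Specializing Theorem \ref{t3.1} to $s=0$, $p=2$, and using that $L^{2}(\Gamma,dS)$ and $L^{2}(\Gamma,dS/b)$ coincide as sets with equivalent norms, $\Xi_{\lambda}$ is a linear isomorphism from $H^{2}(\Omega)$ onto $\mathbb{X}_{2}$. Under the natural identification $u\mapsto (u|_{\Omega},u|_{\Gamma })^{T}$, the restriction of $A+\lambda I$ to the image of $H^{2}(\Omega)$ inside $D(A)$ coincides with $\Xi_{\lambda}$. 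Since $\Xi_{\lambda}$ is surjective onto $\mathbb{X}_{2}$ while $A+\lambda I$ is injective on $D(A)$, the image of $H^{2}(\Omega)$ must exhaust $D(A)$, whence $D(A)=H^{2}(\Omega)$.

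The case $q>0$ is the main obstacle, since no direct analog of Theorem \ref{t3.1} is stated and the required regularity has to be built up by a bootstrap in which interior and boundary regularity must be improved alternately. Pick $U\in D(A)$ and set $F=(A+\lambda I)U\in \mathbb{X}_{2}$ for some $\lambda>0$. Because $A$ is the operator associated with the closed form $\varrho$, we have $U\in D(\varrho)$, so $u\in H^{1}(\Omega)$ with $u|_{\Gamma }\in H^{1}(\Gamma)$. Testing the resolvent identity $\varrho(U,V)+\lambda \langle U,V\rangle_{\mathbb{X}_{2}}=\langle F,V\rangle_{\mathbb{X}_{2}}$ against $v\in C_{c}^{\infty }(\Omega)$ gives $-\Delta u+\lambda u=f_{1}$ distributionally, so $\Delta u\in L^{2}(\Omega)$ and the normal trace $\partial_{n}u$ makes sense in $H^{-1/2}(\Gamma)$ via Green's formula. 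Substituting back into the full weak formulation and integrating by parts on the closed manifold $\Gamma$ yields the boundary identity $b\partial_{n}u+(c+\lambda)u-qb\Delta_{\Gamma }u=f_{2}$ in $H^{-1/2}(\Gamma)$; hence $\Delta_{\Gamma }u\in H^{-1/2}(\Gamma)$, and elliptic regularity for the Laplace--Beltrami operator on $\Gamma$ improves this to $u|_{\Gamma }\in H^{3/2}(\Gamma)$. Since $u\in H^{1}(\Omega)$ now solves $-\Delta u=f_{1}-\lambda u\in L^{2}(\Omega)$ with trace in $H^{3/2}(\Gamma)$, elliptic regularity for the Dirichlet problem upgrades this to $u\in H^{2}(\Omega)$, whereupon $\partial_{n}u\in H^{1/2}(\Gamma)\subseteq L^{2}(\Gamma)$ and the boundary identity itself upgrades to $\Delta_{\Gamma }u\in L^{2}(\Gamma)$, yielding $u|_{\Gamma }\in H^{2}(\Gamma)$; thus $u\in H^{2}_{\ast}(\Omega)$. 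The concluding sentence of the theorem, concerning the operator defined in \eqref{3.0}, follows by the same argument with $-\Delta$ replaced by $-\nabla \cdot \mathcal{A}(x)\nabla$ and the normal derivative by the conormal derivative $\partial_{n}^{\mathcal{A}}$, all invoked elliptic regularity results being valid in that greater generality.
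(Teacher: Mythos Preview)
Your argument is correct. For $q=0$ it coincides with the paper's: both derive $D(A)=H^{2}(\Omega)$ directly from Theorem~\ref{t3.1} at $s=0$, $p=2$, together with the bijectivity of $A+\lambda I$ coming from selfadjointness.

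For $q>0$ the routes diverge. The paper does \emph{not} bootstrap; instead it revisits the proof of Theorem~\ref{t3.1} itself. Passing to Fourier variables on the half-space, the boundary symbol acquires the extra term $qb|\zeta|^{2}$, which does not spoil the solvability of the transformed boundary condition since the relevant coefficient stays uniformly positive. Carrying the standard localize--flatten--patch machinery through with this modification yields the two-sided estimate \eqref{estq>0} and the surjectivity of $u\mapsto(\widehat{A}u,\widehat{B}u)^{T}$ from $H^{2}_{\ast}(\Omega)$ onto $\mathbb{X}_{2}$; the domain characterization then follows exactly as in the $q=0$ case. Your approach instead decouples the problem: you use elliptic regularity for the Dirichlet Laplacian on $\Omega$ and for the Laplace--Beltrami operator on the closed manifold $\Gamma$ as black boxes, and alternate between them to climb from $H^{1}(\Omega)\times H^{1}(\Gamma)$ up to $H^{2}_{\ast}(\Omega)$. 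This is more elementary and sidesteps the pseudodifferential calculus (indeed it is closer in spirit to Remark~3.1, which points to a Sobolev-inequality proof of the first half of \eqref{estq>0} under merely $C^{2}$ data). The price is that you obtain only the domain identification, not the full isomorphism estimate \eqref{estq>0} directly---though of course that estimate follows a posteriori from the closed graph theorem once $D(A)=H^{2}_{\ast}(\Omega)$ is known.
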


Before outlining the proof of this theorem we make some remarks. Theorem \ref%
{t3.2} gives the first "simple" explicit characterization of $D\left(
A\right) $, including the case of $q>0.$ Normally, knowing that $D\left(
A_{0}\right) $ is a core for $A$ is enough for most purposes involving
linear problems. But we need to know $D\left( A\right) $ exactly in order to
apply the Brezis-Haraux result (see Proposition \ref{theo-equ} below).
Theorem \ref{t3.2} assumes that $\Gamma ,$ $b$ and $c$ are $C^{\infty }$.
Surely this much regularity is not needed. But the proof is based on pseudo
differential operator techniques and this theory is always presented in the $%
C^{\infty }$ context, because to do otherwise would entail many complicated
calculations requiring a lot of courage. So Theorem \ref{t3.2} should be
valid if everything is $C^{2},$ but this is merely an educated guess
(however, see Remark 3.1).

We wish to recall the earlier work on this problem by Escher \cite{Es} (see
also Fila and Quittner \cite{FQ}). Escher proved Theorem \ref{t3.2} in the
special case of $b\equiv 1$ and $q=0$. He worked in the $\mathbb{X}_{p}$
context for $1<p<+\infty ,$ but, by focusing on the analytic semigroup
aspect of the problem, he did not notice the selfadjointness of $A$.
Moreover, his restriction to the case of $b\equiv 1$ avoids many interesting
cases, since the coefficient $b$ has physical significance (cf. \cite{Gis}).

We now recall the strategy of the proof of Theorem \ref{t3.1}. We outline
the proof in several steps:\newline

\textbf{Step 1}. Treat the case of constant coefficients and take $\Omega $
to be a half-space.\newline

\textbf{Step 2}. Then localizing and using a partition of unity, this breaks
the problem down into a large number of problems $\left\{ P_{j}\right\} ,$
where the portion of $\Gamma $ is the subdomain corresponding to $P_{j}$ is
almost flat and the coefficients are almost constants.\newline

Flatten out the boundary and solve each $P_{j},$ using Step 1, and the
theory of pseudo differential operators (see, e.g., Taylor \cite{Ta}).
Finally, put everything together and complete the proof. The proof is quite
long, technical and complicated, but it is now well understood and standard.
For the moment we focus on Step 1 and, for simplicity, assume that $\mathcal{%
A}$ is the identity matrix, so that $A=-\Delta $. Then our problem (\ref{3.0}%
) becomes the constant coefficient problem:%
\begin{equation}
\left\{
\begin{array}{cc}
\widehat{A}u=-\Delta u+\lambda u=f & \text{in }\mathbb{R}_{+}^{N}, \\
\widehat{B}u=b\partial _{n}u+cu+\lambda u-qb\Delta _{\Gamma }u=g & \text{on }%
\partial \mathbb{R}_{+}^{N}.%
\end{array}%
\right.  \label{3.2}
\end{equation}%
Here $\mathbb{R}_{+}^{N}=\{x=\left( y,z\right) :y\in \mathbb{R}^{N-1},$ $%
z\geq 0\}$, $\partial \mathbb{R}_{+}^{N}=\{x=\left( y,0\right) :y\in \mathbb{%
R}^{N-1}\}$ and the boundary condition of (\ref{3.2}) is equivalent to%
\begin{equation}
b\partial _{z}u+cu+\lambda u-qb\Delta _{y}u=g  \label{3.4}
\end{equation}%
on $\partial \mathbb{R}_{+}^{N}.$ For a function $h\left( y,z\right) ,$ let $%
\widehat{h}\left( \zeta ,z\right) $ be the Fourier transform in the $\mathbb{%
R}^{N-1}$-variable with $z$ fixed:%
\begin{equation*}
\widehat{h}\left( \zeta ,z\right) =\left( 2\pi \right) ^{\frac{1-N}{2}%
}\int\limits_{\mathbb{R}^{N-1}}e^{-i\zeta \cdot y}h\left( y,z\right) dy,%
\text{ }\left( \zeta ,z\right) \in \mathbb{R}_{+}^{N}.
\end{equation*}%
Then, in Fourier space, the first equation of (\ref{3.2}) and equation (\ref%
{3.4}) become%
\begin{equation}
\frac{\partial ^{2}\widehat{u}}{\partial z^{2}}-\left( \left\vert \zeta
\right\vert ^{2}+\lambda \right) \widehat{u}=\widehat{f}\text{ in }\mathbb{R}%
_{+}^{N},  \label{3.5}
\end{equation}%
\begin{equation}
b\frac{\partial \widehat{u}}{\partial z}+\left( c+\lambda +qb\left\vert
\zeta \right\vert ^{2}\right) \widehat{u}=\widehat{g}\text{ on }\partial
\mathbb{R}_{+}^{N}.  \label{3.6}
\end{equation}%
We need $u$ to be an $L^{2}$ function. To solve (\ref{3.5}), one finds the
general solution of the homogeneous equation and adds to it a particular
solution of (\ref{3.5}), obtained by the variation of constants formula. The
general solution of the homogenous version of (\ref{3.5}) is%
\begin{equation}
\widehat{u}\left( \zeta ,z\right) =C_{1}e^{\gamma _{1}z}+C_{2}e^{\gamma
_{2}z},  \label{3.6bis}
\end{equation}%
where%
\begin{equation*}
\gamma _{j}=\left( -1\right) ^{j+1}\left( \left\vert \zeta \right\vert
^{2}+\lambda \right) ^{1/2},\text{ }j=1,2.
\end{equation*}%
Then for each $\zeta \in \mathbb{R}^{N-1}$, $\gamma _{2}<0<\gamma _{1}.$
Thus, the general $L^{2}$ solution of the homogeneous problem is given by (%
\ref{3.6bis}), with $C_{2}$ an arbitrary constant and $C_{1}=0$.

Next, (\ref{3.6}) is of the form%
\begin{equation*}
\frac{\partial \widehat{u}}{\partial z}-p\left( \zeta \right) =m\left( \zeta
\right) ,
\end{equation*}%
for $z=0,$ where $p\geq \varepsilon _{0}>0$ for all $\zeta $ (For more
general problems, the corresponding inequality follows from uniform
ellipticity). It follows that (\ref{3.2}) (as well as (\ref{3.5}), (\ref{3.6}%
)) has a unique $L^{2}$ solution. Note that this works for $q>0$ as well as
for $q=0$. For $q>0,$ we require that $\left\vert \zeta \right\vert ^{2}%
\widehat{u}$ as well as $\widehat{u}$ is in $L^{2}$. If one studies the
proof in \cite{T} in detail, minor modifications of the tedious calculations
lead to the proof of Theorem \ref{t3.2}.

More precisely, for $q>0$, we conclude that there is a positive constant $%
C=C\left( q,b,c,\lambda ,\mathcal{A}\right) ,$ for every $\lambda >0,$ such
that%
\begin{equation}
C^{-1}\left\Vert u\right\Vert _{H_{\ast }^{2}\left( \Omega \right) }\leq
\left\Vert \left( \widehat{A}u,\widehat{B}u\right) ^{T}\right\Vert _{\mathbb{%
X}_{2}}\leq C\left\Vert u\right\Vert _{H_{\ast }^{2}\left( \Omega \right) }
\label{estq>0}
\end{equation}%
for all $u\in H_{\ast }^{2}\left( \Omega \right) .$ Moreover, the map $%
u\mapsto \left( \widehat{A}u,\widehat{B}u\right) ^{T}$ is a surjective
linear isomorphism of $H_{\ast }^{2}\left( \Omega \right) $ onto $\mathbb{X}%
_{2},$ for $q>0$. Above in (\ref{estq>0}), the norm in $H_{\ast }^{2}\left(
\Omega \right) $ is defined as%
\begin{equation*}
\left\Vert u\right\Vert _{H_{\ast }^{2}\left( \Omega \right) }=\left(
\left\Vert u\right\Vert _{H^{2}\left( \Omega \right) }^{2}+\left\Vert
tr\left( u\right) \right\Vert _{H^{2}\left( \Gamma \right) }^{2}\right)
^{1/2}.
\end{equation*}%
From this, the proof of Theorem \ref{t3.2} follows. $\square $\newline

\noindent \textbf{Remark 3.1. }We note that the first inequality of (\ref%
{estq>0}) was already obtained in \cite[Lemma A.1]{MZ}\ for the weak
solutions of (\ref{3.0}), using standard Sobolev inequalities and assuming $%
b,$ $c\in C\left( \Gamma \right) ,$ $b,\lambda >0,$ $\mathcal{A}=I_{N\times
N}$ and $\Gamma $ is of class $C^{2}$. Observe that (\ref{3.0}) is also an
elliptic boundary value problem in the sense specified in \cite{H, Pe},
where similar estimates to (\ref{estq>0}) were also obtained. The second
inequality of (\ref{estq>0}) is obvious and is based on the definition of $%
\widehat{A}$ and $\widehat{B}$.

\section{Convex analysis}

We begin with the following assumptions:\newline

\textbf{(H1) }The functions $\alpha _{i}:\mathbb{R}\rightarrow \mathbb{R}$, $%
i=1,2,$ are continuous, monotone nondecreasing with $\alpha _{i}(0)=0$.%
\newline

\textbf{(H2) }Let $\Lambda _{i}$ be as in (\ref{def}) and suppose that they
satisfy the \emph{$\triangle _{2}$-condition near infinity,} in the sense
that, there are positive constants $t_{i},$ $C_{i}>0$, $i=1,2,$ such that%
\begin{equation}
\Lambda _{i}(2t)\leq C_{i}\Lambda _{i}(t),\;\mbox{ for all }\;t\geq t_{i}.
\label{delta}
\end{equation}

Let $\tilde{\alpha}_{i}:\;\mathbb{R}\rightarrow \mathbb{R}$ $(i=1,2)$ be the
inverse of $\alpha _{i}$. Then $\tilde{\alpha}_{i}$ is a nondecresing
function from $\mathbb{R}$ to $\mathbb{R}$, which is multivalued at its
jumps and it is in $L_{loc}^{1}\left( \mathbb{R}\right) $. Its graph is a
connected subset of $\mathbb{R}^{2}$. Let $\widetilde{L}_{i}:\mathbb{R}%
\rightarrow \lbrack 0,+\infty ),$ $i=1,2,$ be defined by%
\begin{equation}
\widetilde{L}_{i}(t):=\int_{0}^{t}\widetilde{\alpha }_{i}(s)ds\text{ and }%
\widetilde{\Lambda }_{i}:=\max \left\{ \widetilde{L}_{i}(t),\widetilde{L}%
_{i}(-t)\right\} ,\text{ for all }t\in \mathbb{R}.  \label{lambda-j}
\end{equation}

All the functions given in (\ref{def}) and (\ref{lambda-j}) are convex and
continuous on $\mathbb{R}$, nondecreasing on $\mathbb{R}_{+}$, and all
vanish at the origin; $\Lambda _{i}$ and $\widetilde{\Lambda }_{i}$ are even
functions and are complementary Young functions in the sense of \cite[Chap.
I, Section 1.3, Theorem 3]{RR-1}, but they need not be $N$-functions. Note
that $L_{i}^{^{\prime }}\left( t\right) =\alpha _{i}\left( t\right) $ on $%
\mathbb{R}$ and $\widetilde{L}_{i}^{^{\prime }}\left( t\right) =\widetilde{%
\alpha }_{i}\left( t\right) $ a.e.; $\left\vert \Lambda _{i}^{^{\prime
}}\left( t\right) \right\vert \geq \left\vert \alpha _{i}\left( t\right)
\right\vert $ and $\left\vert \widetilde{\Lambda }_{i}^{^{\prime }}\left(
t\right) \right\vert \geq \left\vert \widetilde{\alpha }_{i}\left( t\right)
\right\vert $ almost everywhere. It follows, from \cite[Chap. I, Section
1.3, Theorem 3]{RR-1}, that for all $s,$ $t\in \mathbb{R}$,
\begin{equation}
\left\vert st\right\vert \leq L_i(t)+\widetilde L_i(s)\leq \Lambda _{i}(t)+%
\widetilde{\Lambda }_{i}(s).  \label{RR-ine}
\end{equation}%
Suppose that $\Lambda _{i}\left( s\right) =L_{i}\left( \tau \right) $ and $%
\widetilde{\Lambda }_{i}\left( s\right) =\widetilde{L}_{i}\left( \sigma
\right) $, where $\tau $ is $s$ or $-s$ and $\sigma $ is $t$ or $-t$. If $%
\tau =\widetilde{\alpha }_{i}(\sigma )$ or $\sigma =\alpha _{i}(\tau ),$
then we also have equality, that is,%
\begin{equation}
\widetilde L_i(\alpha_i(\tau))=\widetilde{\Lambda }_{i}(\alpha _{i}(\tau
))=\tau \alpha _{i}(\tau )-\Lambda _{i}(\tau
)=\tau\alpha_i(\tau)-L_i(\tau),\;\text{ }i=1,2.  \label{RR-ega}
\end{equation}

Let now $\alpha _{i}:\;\mathbb{R}\rightarrow \mathbb{R}$, $i=1,2,$ satisfy
\textbf{(H1)}. Define the functional $J:\mathbb{X}_{2}\rightarrow \lbrack
0,+\infty ]$ by%
\begin{equation}
J\left( U\right) =\frac{1}{2}\int\limits_{\Omega }\left\vert \nabla
u\right\vert ^{2}dx+\int\limits_{\Omega }L_{1}\left( u\right)
dx+\int\limits_{\Gamma }j_{2}\left( x,u\right) \frac{dS}{b\left( x\right) },
\label{4.1}
\end{equation}%
for $U=\left( u,tr\left( u\right) \right) ^{T},$ $u\in H^{1}\left( \Omega
\right) $ such that all three integrals exist, and $tr\left( u\right) \in
H^{1}\left( \Gamma \right) $ if $q>0$. We take%
\begin{equation}
j_{2}\left( x,u\right) =c\left( x\right) \frac{u^{2}}{2}+qb\left( x\right)
\frac{\left\vert \nabla _{\Gamma }u\right\vert ^{2}}{2}+L_{2}\left( u\right)
.  \label{4.2}
\end{equation}%
The effective domain $\mathbb{D}_{q}:=D\left( J\right) $\ of the functional $%
J$ is precisely%
\begin{equation}
\mathbb{D}_{0}=\{U=\left( u,tr\left( u\right) )\right) ^{T}:u\in H^{1}\left(
\Omega \right) ,\;\int_{\Omega }\Lambda _{1}(u)dx+\int_{\Gamma }\Lambda
_{2}(u)\frac{dS}{b\left( x\right) }<\infty \}  \label{efec-0}
\end{equation}%
if $q=0$, and
\begin{equation}
\mathbb{D}_{q}=\{U=\left( u,tr\left( u\right) )\right) ^{T}\in \mathbb{D}%
_{0}:tr\left( u\right) \in H^{1}\left( \Gamma \right) \}  \label{efec-1}
\end{equation}%
if $q>0$, respectively. Define $J\left( U\right) =+\infty ,$ for all $U\in
\mathbb{X}_{2}\backslash \mathbb{D}_{q},$ $q\geq 0$. As before, for $u\in
H^{1}\left( \Omega \right) $, we identify $u$ with $U=$ $\left( u,tr\left(
u\right) \right) ^{T}\in \mathbb{X}_{2}$. Then $J$ is proper, convex and
lower semicontinuous on $\mathbb{X}_{2},$ as can be shown adapting the ideas
of Brezis \cite{B} (see also \cite{FGGR2}).


Suppose now that $\alpha _{i}$, $i=1,2$, satisfies assumptions \textbf{(H1)}-%
\textbf{(H2)}. Then, by (\ref{delta}), the monotonicity (on $\mathbb{R}_{+}$%
) and the convexity of $\Lambda _{i},$\ $i=1,2,$ we have that $\mathbb{D}%
_{q},$ $q\geq 0,$ is a vector space (see, e.g., \cite[Chap. III, Section
3.1, Theorem 2]{RR-1}).

In what follows, we shall compute the subdifferential of $J$. To this end,
let $F:=(f,g)^{T}\in \mathbb{X}_{2}$ and $U=(u,tr(u))^{T}\in \mathbb{D}_{q}$%
. We claim that $F\in \partial J(U)$ if and only if%
\begin{align}
-\Delta u+\alpha _{1}(u)& =f\;\text{in }\mathcal{D}^{\prime }(\Omega ),
\label{subdiff} \\
b(x)\frac{\partial u}{\partial n}+c(x)u-qb(x)\Delta _{\Gamma }u+\alpha
_{2}(u)& =g\text{ on }\Gamma .  \notag
\end{align}%
First, assume that $F\in \partial J(U)$. Then, by definition, for every $%
V=(v,tr(v))^{T}\in \mathbb{D}_{q}$, we have%
\begin{align}
\int_{\Omega }f(v-u)dx+\int_{\Gamma }g(v-u)\frac{dS}{b}& \leq \frac{1}{2}%
\int_{\Omega }\left( |\nabla v|^{2}-|\nabla u|^{2}\right) dx
\label{subdiff2} \\
& +\int_{\Omega }\left( L_{1}(v)-L_{1}(u)\right) dx+\int_{\Gamma }\left(
j_{2}(x,v)-j_{2}(x,u)\right) \frac{dS}{b},  \notag
\end{align}%
where, from (\ref{4.2}), we find that%
\begin{align*}
\int_{\Gamma }\left( j_{2}(x,v)-j_{2}(x,u)\right) \frac{dS}{b}& =\frac{1}{2}%
\int_{\Gamma }c\left( |v|^{2}-|u|^{2}\right) \frac{dS}{b}+q\frac{1}{2}%
\int_{\Gamma }\left( |\nabla _{\Gamma }v|^{2}-|\nabla _{\Gamma
}u|^{2}\right) dS \\
& +\int_{\Gamma }\left( L_{2}(v)-L_{2}(u)\right) \frac{dS}{b}.
\end{align*}%
Let $W=(w,tr(w))^{T}\in \mathbb{D}_{q}$ be fixed and let $t\in \lbrack 0,1]$%
. Choosing $V:=tW+(1-t)U\in \mathbb{D}_{q}$ in (\ref{subdiff2}), dividing by
$t$ and taking the limit as $t\rightarrow 0^{+},$ from (\ref{subdiff2}), we
obtain%
\begin{align}
& \int_{\Omega }f(w-u)dx+\int_{\Gamma }g(w-u)\frac{dS}{b}  \label{subdiff3}
\\
& \leq \int_{\Omega }\nabla u\cdot \nabla (w-u)dx+\int_{\Omega }\alpha
_{1}(u)(w-u)dx+\int_{\Gamma }cu(w-u)\frac{dS}{b}  \notag \\
& +q\int_{\Gamma }\nabla _{\Gamma }u\cdot \nabla _{\Gamma
}(w-u)dS+\int_{\Gamma }\alpha _{2}(u)(w-u)\frac{dS}{b}.  \notag
\end{align}%
Here we used the definition of the functions $L_{i}$ ($i=1,2$) from (\ref%
{def}) and the Lebesgue Dominated convergence theorem, which implies%
\begin{equation*}
\lim_{t\rightarrow 0^{+}}\int_{\Omega }\frac{L_{1}(u+t(w-u))-L_{1}(u)}{t}%
dx=\int_{\Omega }\alpha _{1}(u)(w-u)dx
\end{equation*}%
and
\begin{equation*}
\lim_{t\rightarrow 0^{+}}\int_{\Gamma }\frac{L_{2}(u+t(w-u))-L_{2}(u)}{t}%
\frac{dS}{b}=\int_{\Gamma }\alpha _{2}(u)(w-u)\frac{dS}{b}.
\end{equation*}%
Letting $W=U\pm \Psi $ in (\ref{subdiff3}), where $\Psi =(\psi ,tr(\psi
))^{T}$ is an arbitrary element of $\mathbb{D}_{q}$, we easily deduce%
\begin{align}
\int_{\Omega }f\psi dx+\int_{\Gamma }g\psi \frac{dS}{b}& =\int_{\Omega
}\nabla u\cdot \nabla \psi dx+\int_{\Omega }\alpha _{1}(u)\psi dx
\label{subdiff4} \\
& +\int_{\Gamma }cu\psi \frac{dS}{b}+q\int_{\Gamma }\nabla _{\Gamma }u\cdot
\nabla _{\Gamma }\psi \frac{dS}{b}+\int_{\Gamma }\alpha _{2}(u)\psi \frac{dS%
}{b}.  \notag
\end{align}%
Taking $\psi \in C_{0}^{\infty }(\Omega )$ in (\ref{subdiff4}), one obtains
the first equation of (\ref{subdiff}). A simple partial integration argument
shows that one also has the second equation in (\ref{subdiff4}).

We shall now prove the converse. Let $U=(u,tr(u))^{T}\in \mathbb{D}_{q}$ be
fixed and let $V=(v,tr(v))^{T}\in \mathbb{D}_{q}$ be arbitrary. On account
of \eqref{RR-ine} and \eqref{RR-ega}, we have%
\begin{align}
\alpha _{1}(u)(v-u)& =\alpha _{1}(u)v-\alpha _{1}(u)u  \label{eq4-18-1} \\
& \leq L_{1}(v)+\widetilde{L}_{1}(\alpha _{1}(u))-\alpha _{1}(u)u\text{,}
\notag \\
& \leq L_{1}(v)-L_{1}(u)  \notag
\end{align}%
and
\begin{align}
\alpha _{2}(u)(v-u)& =\alpha _{2}(u)v-\alpha _{2}(u)u  \label{eq4-19-2} \\
& \leq L_{2}(v)+\widetilde{L}_{2}(\alpha _{2}(u))-\alpha _{2}(u)u\text{,}
\notag \\
& \leq L_{2}(v)-L_{2}(u).  \notag
\end{align}%
Therefore, by (\ref{4.1}) and using (\ref{eq4-18-1})-(\ref{eq4-19-2}), we
have%
\begin{align}
J(V)-J(U)& =\frac{1}{2}\int_{\Omega }\left( |\nabla v|^{2}-|\nabla
u|^{2}\right) dx+\int_{\Omega }\left( L_{1}(v)-L_{1}(u)\right) dx
\label{subdiff5} \\
& +\frac{1}{2}\int_{\Gamma }c\left( |v|^{2}-|u|^{2}\right) \frac{dS}{b}+%
\frac{q}{2}\int_{\Gamma }\left( |\nabla _{\Gamma }v|^{2}-|\nabla _{\Gamma
}u|^{2}\right) dS  \notag \\
& +\int_{\Gamma }\left( L_{2}(v)-L_{2}(u)\right) \frac{dS}{b}  \notag \\
& \geq \int_{\Omega }\nabla u\cdot \nabla (v-u)dx+\int_{\Omega }\alpha
_{1}(u)(v-u)dx  \notag \\
& +\int_{\Gamma }cu(v-u)\frac{dS}{b}+q\int_{\Gamma }\nabla _{\Gamma }u\cdot
\nabla _{\Gamma }(v-u)dS+\int_{\Gamma }\alpha _{2}(u)(v-u)\frac{dS}{b}.
\notag
\end{align}%
Thus, from Definition \ref{def-weak-sol} and (\ref{subdiff5}), for all $%
(V-U)\in \mathbb{D}_{q}$, it follows that%
\begin{equation*}
J(V)-J(U)\geq \int_{\Omega }f(v-u)dx+\int_{\Gamma }g(v-u)\frac{dS}{b}.
\end{equation*}%
This inequality is also true for $V=U+W\in \mathbb{D}_{q}$, for some
arbitrary $W\in \mathbb{D}_{q}$. Indeed, let $W=(w,tr(w))^{T}\in \mathbb{D}%
_{q}$ be fixed, $w_{m}:=[w\wedge m]\vee (-m)$ and set $%
W_{m}:=(w_{m},tr(w_{m}))^{T}$. Let $W_{m,n}=(w_{m,n},tr(w_{m,n}))^{T}$ be a
sequence in $\mathbb{D}_{q}$ such that $-m\leq w_{m,n}\leq m$, $%
w_{m,n}\rightarrow w_{m}$ in $H^{1}(\Omega )$ and $tr(w_{m,n})\rightarrow
tr(w_{m})$ in $H^{1}(\Gamma ),$ if $q>0,$ as $n\rightarrow \infty $. Then,%
\begin{align}
J(W_{m}+U)-J(U)& =\lim_{n\rightarrow \infty }J(W_{m,n}+U)-J(U)
\label{subdiff6} \\
& \geq \lim_{n\rightarrow \infty }\left( \int_{\Omega
}fw_{m,n}dx+\int_{\Gamma }gw_{m,n}\frac{dS}{b}\right)  \notag \\
& \geq \int_{\Omega }fw_{m}\;dx+\int_{\Gamma }gw_{m}\frac{dS}{b}.  \notag
\end{align}%
Passing to the limit as $m\rightarrow \infty $ in (\ref{subdiff6})\ in a
standard way and using the fact $W\in \mathbb{D}_{q}$ is arbitrary, we
immediately get%
\begin{equation}
J(W+U)-J(U)\geq \int_{\Omega }fwdx+\int_{\Gamma }gw\frac{dS}{b}.
\label{subdiff7}
\end{equation}%
Since $\mathbb{D}_{q}$ is a vector space, we also obtain the corresponding
inequality (\ref{subdiff7}) when replacing $W+U$ by $V$. Hence, $F\in
\partial J(U)$ and this completes the proof of the claim.

We have shown that the (single-valued) subdifferential of the functional $J$
at $U$ is given by%
\begin{equation}
D(\partial J)=\left\{ (u,tr\left( u\right) )^{T}\in \mathbb{D}_{q}:-\Delta
u+\alpha _{1}(u)\in L^{2}(\Omega ),\text{ }b(x)\frac{\partial u}{\partial n}%
-qb(x)\Delta _{\Gamma }u+\alpha _{2}(u)\in L^{2}(\Gamma )\right\}
\label{J-1}
\end{equation}%
and%
\begin{equation}
\partial J(U)=\left( -\Delta u+\alpha _{1}\left( u\right) ,b\left( x\right)
\frac{\partial u}{\partial n}+c\left( x\right) u-qb\left( x\right) \Delta
_{\Gamma }u+\alpha _{2}\left( u\right) \right) ^{T}.  \label{J-2}
\end{equation}




Since the functional $J$ is proper, convex and lower-semicontinuous, it
follows from Minty's theorem \cite{Mi} that the operator $B:=\partial J$ is
maximal monotone (or $-B$ is m-dissipative), for our choice of the function $%
j_{2}\left( x,u\right) $ in (\ref{4.2}). Thus, the first result of this
section is the following.

\begin{theorem}
\label{t4.1} The operator $B$ is the subdifferential of a proper, convex,
lower semicontinuous function on $\mathbb{X}_{2}$.
\end{theorem}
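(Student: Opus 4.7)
The plan is to observe that by construction $B = \partial J$, where $J$ is the functional defined in (4.1)--(4.2), so the theorem reduces to verifying that $J:\mathbb{X}_{2}\rightarrow \lbrack 0,+\infty ]$ is (i) proper, (ii) convex, and (iii) lower semicontinuous. Properness is immediate: taking $u\equiv 0$ gives $U=0\in \mathbb{D}_{q}$ with $J(0)=0$ (since $L_{i}(0)=0$ because $\alpha _{i}(0)=0$), so $J\not\equiv +\infty $ and $J\geq 0$.

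For convexity, I would argue term by term. The Dirichlet energies $\tfrac{1}{2}\int_{\Omega }|\nabla u|^{2}dx$ and $\tfrac{q}{2}\int_{\Gamma }|\nabla _{\Gamma }u|^{2}dS$ are quadratic forms in $U$, hence convex; the weighted boundary term $\tfrac{1}{2}\int_{\Gamma }c\,u^{2}dS/b$ is convex because $c\geq 0$; and since each $L_{i}$ is the antiderivative of the nondecreasing function $\alpha _{i}$, the functions $L_{i}$ are convex on $\mathbb{R}$, so the integrals $\int_{\Omega }L_{1}(u)dx$ and $\int_{\Gamma }L_{2}(u)dS/b$ are convex in $U$. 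Sums of convex functions are convex, giving convexity of $J$ on $\mathbb{D}_{q}$, and the extension by $+\infty $ outside $\mathbb{D}_{q}$ preserves convexity (since $\mathbb{D}_{q}$ is a vector space, as noted in the paragraph preceding the subdifferential computation).

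Lower semicontinuity is the main technical step. I would take $U_{n}\to U$ in $\mathbb{X}_{2}$ and assume $\ell :=\liminf_{n}J(U_{n})<+\infty $; after extracting a subsequence one has $J(U_{n})\to \ell $ and $\sup_{n}J(U_{n})<\infty $. The bound on $J(U_{n})$ yields a uniform $L^{2}$-bound on $\nabla u_{n}$ in $\Omega $ and, when $q>0$, on $\nabla _{\Gamma }u_{n}$ on $\Gamma $. Combined with $\mathbb{X}_{2}$-convergence, this gives $u_{n}\rightharpoonup u$ in $H^{1}(\Omega )$ and, if $q>0$, $\mathrm{tr}(u_{n})\rightharpoonup \mathrm{tr}(u)$ in $H^{1}(\Gamma )$; Rellich and the trace theorem produce a subsequence with $u_{n}\to u$ a.e.\ on $\Omega $ and $\mathrm{tr}(u_{n})\to \mathrm{tr}(u)$ a.e.\ on $\Gamma $. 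Weak lower semicontinuity of the $H^{1}$ and $H^{1}(\Gamma )$ seminorms handles the gradient terms, while Fatou's lemma applied to the nonnegative integrands $L_{i}(u_{n})$ and $c|u_{n}|^{2}$ handles the remaining terms. Summing the resulting inequalities gives $J(U)\leq \ell $.

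The hard part is the boundary analysis when $q=0$: there is no $H^{1}(\Gamma )$ control on the traces from $J(U_{n})$, so one must recover a.e.\ boundary convergence purely from $L^{2}(\Gamma ,dS/b)$-convergence of the trace components in $\mathbb{X}_{2}$, after which Fatou still applies because $L_{2}\geq 0$ and $c\geq 0$. Once (i)--(iii) are established, the theorem is immediate from the definition of the subdifferential, and the Minty theorem quoted in the text additionally yields maximal monotonicity of $B$, as needed for the Brezis--Haraux machinery invoked in the next step.
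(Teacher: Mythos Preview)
Your proposal is correct and follows essentially the same route as the paper: the paper defines $J$ in (4.1)--(4.2), asserts (with a reference to Brezis \cite{B} and \cite{FGGR2}) that $J$ is proper, convex and lower semicontinuous, computes $\partial J$ explicitly to obtain the operator $B$, and then states Theorem~\ref{t4.1} as the resulting summary; you reproduce this structure while supplying the details of lower semicontinuity that the paper delegates to the literature. One small remark: for the convexity step you invoke that $\mathbb{D}_{q}$ is a vector space, but this uses \textbf{(H2)}, which is not assumed for Theorem~\ref{t4.1}; it suffices (and is cleaner) to note that $\mathbb{D}_{q}$ is convex because each $\Lambda_{i}$ is convex, which needs only \textbf{(H1)}.
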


Theorem \ref{t4.1} applies to both $A$, the negative Wentzell Laplacian (by
taking both $\alpha _{1}$ and $\alpha _{2}$ to be zero) and to the operator
governing (\ref{1.7}) on $\mathbb{X}_{2}$. We remark that the above
construction leads easily to a proof that the Wentzell Laplacian has a
compact resolvent. Of course, this follows easily from the results quoted in
Section $3$, but the compactness does not require $C^{\infty }$-regularity.

Next, let $A_{2}U=\left( \alpha _{1}\left( u\right) ,\alpha _{2}\left(
v\right) \right) ^{T},$ for every $U\in D\left( A_{2}\right) ,$ where%
\begin{equation}
D\left( A_{2}\right) =\left\{ \left( u,v)\right) ^{T}\in \mathbb{X}%
_{2}:\left( \alpha _{1}\left( u\right) ,\alpha _{2}\left( v\right) \right)
^{T}\in \mathbb{X}_{2}\right\} .  \label{4.7}
\end{equation}%
Define the functional $J_{2}:\;\mathbb{X}_{2}\rightarrow \lbrack 0,+\infty ]$
by%
\begin{equation*}
J_{2}(U)=%
\begin{cases}
\int_{\Omega }L_{1}(u)dx+\int_{\Gamma }L_{2}(v)\frac{dS}{b(x)},\;\; & \text{%
if }(u,v)^{T}\in D(J_{2}) \\
+\infty & \text{if }(u,v)^{T}\in \mathbb{X}_{2}\backslash D(J_{2}),%
\end{cases}%
\end{equation*}%
with effective domain%
\begin{equation*}
D(J_{2}):=\{(u,v)^{T}\in \mathbb{X}_{2}:\int_{\Omega }\Lambda
_{1}(u)dx+\int_{\Gamma }\Lambda _{2}(v)\frac{dS}{b(x)}<\infty \}.
\end{equation*}%
It is easy to see that, under the assumption \textbf{(H1)} on $\alpha _{i}$,
the functional $J_{2}$ is proper, convex and lower-semicontinuous on $%
\mathbb{X}_{2}$. We have the following.

\begin{lemma}
Let $\alpha _{i}:\;\mathbb{R}\rightarrow \mathbb{R}$, $i=1,2,$ satisfy
\textbf{(H1)}-\textbf{(H2)}. Then the subdifferential $\partial J_{2}$ and
the operator $A_{2}$ coincide, that is, $D(\partial J_{2})=D(A_{2})$ and,
for all $U:=(u,v)^{T}\in D(A_{2}),$ we have%
\begin{equation*}
\partial J_{2}\left( U\right) =A_{2}U=\left( \alpha _{1}\left( u\right)
,\alpha _{2}\left( v\right) \right) ^{T}.
\end{equation*}
\end{lemma}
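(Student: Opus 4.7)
The plan is to establish both inclusions $A_2\subseteq \partial J_2$ and $\partial J_2\subseteq A_2$, in close analogy with the computation of $\partial J$ carried out earlier in Section~4 for the more complicated functional containing the Dirichlet energy and the boundary terms. Since $j_2(x,\cdot)$ has been stripped down to just $L_2(\cdot)$ (no $c$-, $q$-, or $|\nabla|^2$-contributions), the argument for $\partial J_2$ is essentially a cleaner version of what was done for $\partial J$, and the $\Delta_2$-condition \textbf{(H2)} continues to play the same role as before: it guarantees that $D(J_2)$ is a vector space (by \cite[Chap.~III, \S 3.1, Thm.~2]{RR-1} applied to each of $\Lambda_1,\Lambda_2$), so that translation $U\mapsto U+W$ stays in $D(J_2)$ for arbitrary $W\in D(J_2)$.

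For the easy inclusion $A_2\subseteq\partial J_2$, fix $U=(u,v)^T\in D(A_2)$ and an arbitrary $V=(\tilde u,\tilde v)^T\in D(J_2)$. From \eqref{RR-ine} and \eqref{RR-ega}, exactly as in \eqref{eq4-18-1}--\eqref{eq4-19-2}, we get the pointwise inequalities
\begin{equation*}
\alpha_1(u)(\tilde u-u)\le L_1(\tilde u)-L_1(u),\qquad \alpha_2(v)(\tilde v-v)\le L_2(\tilde v)-L_2(v),
\end{equation*}
and integrating over $\Omega$ (resp.\ over $\Gamma$ with weight $dS/b$) yields
\begin{equation*}
J_2(V)-J_2(U)\ge \int_\Omega \alpha_1(u)(\tilde u-u)\,dx+\int_\Gamma \alpha_2(v)(\tilde v-v)\,\frac{dS}{b},
\end{equation*}
which is exactly the statement $A_2U\in\partial J_2(U)$. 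For $V\in\mathbb{X}_2\setminus D(J_2)$ the inequality is trivial since $J_2(V)=+\infty$.

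For the reverse inclusion, let $U=(u,v)^T\in D(\partial J_2)$ and $F=(f,g)^T\in\partial J_2(U)$. Pick any $W=(w_1,w_2)^T\in D(J_2)$; since $D(J_2)$ is a vector space, $U+tW\in D(J_2)$ for all $t\in[0,1]$, so the subdifferential inequality gives
\begin{equation*}
\frac{J_2(U+tW)-J_2(U)}{t}\ge \int_\Omega f w_1\,dx+\int_\Gamma g w_2\,\frac{dS}{b}.
\end{equation*}
The difference quotients $t^{-1}(L_i(u+tw_i)-L_i(u))$ converge pointwise to $\alpha_i(u)w_i$ (using $L_i'=\alpha_i$ and continuity of $\alpha_i$), and convexity of $L_i$ provides the integrable majorant $L_i(u+w_i)-L_i(u)$, which lies in $L^1$ precisely because both $U$ and $U+W$ lie in $D(J_2)$. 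Dominated convergence then yields
\begin{equation*}
\int_\Omega \alpha_1(u)w_1\,dx+\int_\Gamma \alpha_2(v)w_2\,\frac{dS}{b}\ge \int_\Omega f w_1\,dx+\int_\Gamma g w_2\,\frac{dS}{b}.
\end{equation*}
Replacing $W$ by $-W$ (again permissible since $D(J_2)$ is a vector space) turns this into equality.

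It remains to upgrade this identity to pointwise equalities $f=\alpha_1(u)$ a.e.\ in $\Omega$ and $g=\alpha_2(v)$ a.e.\ on $\Gamma$. Testing with $W=(\varphi,0)^T$ and $W=(0,\psi)^T$ for $\varphi\in L^\infty(\Omega)$ of compact support (resp.\ $\psi\in L^\infty(\Gamma)$), both of which are trivially in $D(J_2)$, we obtain
\begin{equation*}
\int_\Omega (f-\alpha_1(u))\varphi\,dx=0,\qquad \int_\Gamma (g-\alpha_2(v))\psi\,\frac{dS}{b}=0,
\end{equation*}
so $f=\alpha_1(u)$ and $g=\alpha_2(v)$ almost everywhere. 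Since $F\in\mathbb{X}_2$, this gives $(\alpha_1(u),\alpha_2(v))^T\in\mathbb{X}_2$, i.e.\ $U\in D(A_2)$, and $\partial J_2(U)=A_2U$. I expect the main subtlety to lie in the dominated-convergence step: having an integrable majorant for the difference quotients over \emph{all} of $\Omega\cup\Gamma$ is what forces us to invoke \textbf{(H2)}, through the fact that it makes $D(J_2)$ closed under addition, so that $U+W\in D(J_2)$ whenever $U,W\in D(J_2)$; without this, the subdifferential inequality would only be available on a cone of admissible directions and we could not conclude equality.
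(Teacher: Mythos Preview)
Your overall strategy matches the paper's exactly, and the reverse inclusion $\partial J_2\subseteq A_2$ is handled just as in the text. There is, however, a genuine gap in your ``easy'' inclusion $A_2\subseteq\partial J_2$.

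You fix $U=(u,v)^T\in D(A_2)$ and derive the integrated inequality
\[
\int_\Omega L_1(\tilde u)\,dx-\int_\Omega L_1(u)\,dx+\int_\Gamma L_2(\tilde v)\,\frac{dS}{b}-\int_\Gamma L_2(v)\,\frac{dS}{b}\ \ge\ \langle A_2U,\,V-U\rangle_{\mathbb{X}_2},
\]
and then identify the left side with $J_2(V)-J_2(U)$. But by the paper's definition, $J_2(U)<\infty$ only when $U\in D(J_2)$, i.e.\ when $\int_\Omega\Lambda_1(u)\,dx+\int_\Gamma\Lambda_2(v)\,dS/b<\infty$; otherwise $J_2(U)=+\infty$ and $\partial J_2(U)=\emptyset$. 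Your pointwise bound only yields $\int L_i<\infty$, not $\int\Lambda_i<\infty$, and these need not coincide since $\Lambda_i(t)=\max\{L_i(t),L_i(-t)\}$. So before the convexity argument you must verify $D(A_2)\subseteq D(J_2)$. The paper does this explicitly: \textbf{(H2)} yields constants $k_i\in(0,1]$ and $t_i>0$ with $k_i\,t\,\alpha_i(t)\le\Lambda_i(t)\le t\,\alpha_i(t)$ for $|t|\ge t_i$, and since $(\alpha_1(u),\alpha_2(v))^T\in\mathbb{X}_2$ and $(u,v)^T\in\mathbb{X}_2$ one gets $u\alpha_1(u)\in L^1(\Omega)$, $v\alpha_2(v)\in L^1(\Gamma)$ by Cauchy--Schwarz, hence $\int\Lambda_i<\infty$.

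In short, \textbf{(H2)} is used \emph{twice}: once (as you note) to make $D(J_2)$ a vector space for the difference-quotient argument, and once to embed $D(A_2)$ into $D(J_2)$ so that $\partial J_2(U)$ is even a candidate to contain $A_2U$. Add this verification and your proof is complete and coincides with the paper's.
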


\begin{proof}
Note that \textbf{(H1)} implies that $\partial J_{2}$ is a single valued
operator. Let $U=(u,v)^{T}\in D(J_{2})$ and $(f,g)^{T}=\partial J_{2}(U)$.
Then, by definition, $(f,g)^{T}\in \mathbb{X}_{2}$ and for every $%
V:=(u_{1},v_{1})^{T}\in D(J_{2}),$ we get%
\begin{equation}
\int_{\Omega }f(u_{1}-u)dx+\int_{\Gamma }g(v_{1}-v)\frac{dS}{b(x)}\leq
J_{2}(V)-J_{2}(U).  \label{comp-sub}
\end{equation}%
Next, let $W=(u,v)^{T}+t(u_{2},v_{2})^{T},$ with $(u_{2},v_{2})^{T}\in
D(J_{2})$ and $0<t\leq 1$. Since \textbf{(H2)} implies that $D(J_{2})$ is a
vector space, then $W\in D(J_{2})$. Now, replacing $V$ in \eqref{comp-sub}
with $W$, dividing by $t$ and taking the limit as $t\rightarrow 0^{+}$
(where we make use of the Lebesgue Dominated Convergence theorem once
again), we obtain%
\begin{equation}
\int_{\Omega }fu_{2}dx+\int_{\Gamma }gv_{2}\frac{dS}{b(x)}\leq \int_{\Omega
}\alpha _{1}(u)u_{2}dx+\int_{\partial \Omega }\alpha _{2}(v)v_{2}\,\frac{dS}{%
b(x)}.  \label{first}
\end{equation}%
Changing $(u_{2},v_{2})^{T}$ to $-(u_{2},v_{2})^{T}$ in (\ref{first}) gives
\begin{equation*}
\int_{\Omega }fu_{2}dx+\int_{\Gamma }gv_{2}\frac{dS}{b(x)}=\int_{\Omega
}\alpha _{1}(u)u_{2}dx+\int_{\partial \Omega }\alpha _{2}(v)v_{2}\,\frac{dS}{%
b(x)}.
\end{equation*}%
In particular, taking $v_{2}=0$, for every $u_{2}\in C_{0}^{\infty }(\Omega
) $, we have%
\begin{equation*}
\int_{\Omega }fu_{2}dx=\int_{\Omega }\alpha _{1}(u)u_{2}dx,
\end{equation*}%
and this shows that $\alpha _{1}(u)=f$. Similarly, one obtains $\alpha
_{2}(v)=g$. We have shown that $U:=(u,v)^{T}\in D(A_{2})$ and $\partial
J_{2}(U)=(\alpha _{1}(u),\alpha _{2}(v))^{T}$.

Conversely, let $U=(u,v)^{T}\in D(A_{2})$ and set $(f,g)^{T}:=A_{2}U=(\alpha
_{1}(u),\alpha _{2}(v))^{T}$. Observe preliminarily that, owing to \textbf{%
(H2)}, there exist constants $t_{i}>0$ and $k_{i}\in (0,1]$ such that
\begin{equation}
k_{i}t\alpha _{i}(t)\leq \Lambda _{i}(t)\leq t\alpha _{i}(t)\text{, for all }%
|t|\geq t_{i},\text{ }i=1,2.  \label{delta-2}
\end{equation}%
Since $(\alpha _{1}(u),\alpha _{2}(v))^{T}\in \mathbb{X}_{2},$ from (\ref%
{delta-2}), it follows that%
\begin{eqnarray*}
\int_{\Omega }\Lambda _{1}(u)dx &=&\int_{\{x\in \Omega
:\;|u(x)|<t_{1}\}}\Lambda _{1}(u)dx+\int_{\{x\in \Omega :\;|u(x)|\geq
t_{1}\}}\Lambda _{1}(u)dx \\
&\leq &|\Omega |(\Lambda _{1}(t_{1})+\Lambda _{1}(-t_{1}))+\int_{\Omega
}u\alpha _{1}(u)dx<\infty ,
\end{eqnarray*}%
where a similar inequality holds for $\Lambda _{2}$. Hence%
\begin{equation*}
\int_{\Omega }\Lambda _{1}(u)dx+\int_{\partial \Omega }\Lambda _{2}(v)\frac{%
dS}{b(x)}<\infty
\end{equation*}%
and this shows that $(u,v)^{T}\in D(J_{2})$. Let $V=(u_{1},v_{1})^{T}\in
D(J_{2})$. Note that by \eqref{eq4-18-1} and \eqref{eq4-19-2}, we have once
more that%
\begin{equation}
\alpha _{1}(u)(u_{1}-u)\leq L_{1}(u_{1})-L_{1}(u)  \label{eq4-18}
\end{equation}%
and
\begin{equation}
\alpha _{2}(v)(v_{1}-v)\leq L_{2}(v_{1})-L_{2}(v).  \label{eq4-19}
\end{equation}%
Therefore, on account of \eqref{eq4-18}-\eqref{eq4-19}, it follows that
\begin{align*}
\int_{\Omega }f(u_{1}-u)dx+\int_{\Gamma }g(v_{1}-v)\frac{dS}{b(x)}&
=\int_{\Omega }\alpha _{1}(u)(u_{1}-u)dx+\int_{\partial \Omega }\alpha
_{2}(v)(v_{1}-v)\frac{dS}{b(x)} \\
& \leq J_{2}(V)-J_{2}(U).
\end{align*}%
By definition, we have shown that $(\alpha _{1}(u),\alpha
_{2}(v))^{T}=\partial J_{2}(U)$. Hence, $U\in D(\partial J_{2})$ and $%
A_{2}U=\partial J_{2}(U).$ This completes the proof.
\end{proof}

We will need the following results from semigroup theory and convex analysis.

\begin{definition}[\protect\cite{BH}]
\label{D1} Let $\mathcal{H}$ be a real Hilbert space. Two subsets $K_{1}$
and $K_{2}$ are almost equal, written as $K_{1}\simeq K_{2},$ if $K_{1}$ and
$K_{2}$ have the same closure and the same interior, that is, $\overline{%
K_{1}}=\overline{K_{2}}$ and $int\left( K_{1}\right) =int\left( K_{2}\right)
.$
\end{definition}

The following result is contained in \cite[pp.173--174]{BH}.

\begin{theorem}
\label{T2} Let $A$ and $B$ be subdifferentials of proper convex lower
semicontinuous functionals $\varphi _{1}$ and $\varphi _{2}$, respectively,
on a real Hilbert space $\mathcal{H}$ with $D(\varphi _{1})\cap D(\varphi
_{2})\neq \emptyset $. Let $C$ be the subdifferential of the proper, convex
lower semicontinuous functional $\varphi _{1}+\varphi _{2}$, that is, $%
C=\partial (\varphi _{1}+\varphi _{2})$. Then
\begin{equation}
\mathcal{R}(A)+\mathcal{R}(B)\subset \overline{\mathcal{R}(C)}\;\;\;%
\mbox{
and }\;\;\;\mbox{Int}\left( \mathcal{R}(A)+\mathcal{R}(B)\right) \subset %
\mbox{Int}\left( \mathcal{R}(C)\right) .  \label{resl-BH}
\end{equation}%
In particular, if the operator $A+B$ is maximal monotone, then
\begin{equation}
\mathcal{R}\left( A+B\right) \simeq \mathcal{R}\left( A\right) +\mathcal{R}%
\left( B\right)  \label{resl-BH-2}
\end{equation}%
and this is the case, if $\partial (\varphi _{1}+\varphi _{2})=\partial
\varphi _{1}+\partial \varphi _{2}$.
\end{theorem}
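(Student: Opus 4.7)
The plan is to follow the classical Brezis--Haraux route via Moreau--Yosida regularization of the sum functional. First I would record the elementary inclusion $A+B \subset C$, which holds for any two subdifferentials (if $u_i^{*}\in \partial\varphi_i(u)$ then $u_1^{*}+u_2^{*}\in \partial(\varphi_1+\varphi_2)(u)$). This already gives $\mathcal{R}(A+B)\subset \mathcal{R}(C)$ and also, after taking any $u\in D(A)\cap D(B)$, the obvious $\mathcal{R}(A+B)\subset \mathcal{R}(A)+\mathcal{R}(B)$. The nontrivial content is the reverse-type inclusion $\mathcal{R}(A)+\mathcal{R}(B)\subset \overline{\mathcal{R}(C)}$.

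To prove this, fix $f=g+h$ with $g\in \mathcal{R}(A)$ and $h\in \mathcal{R}(B)$. By Fenchel--Young, $g\in D(\varphi_1^{*})$ and $h\in D(\varphi_2^{*})$, so
\begin{equation*}
(\varphi_1+\varphi_2)^{*}(f)\;\leq\;\varphi_1^{*}(g)+\varphi_2^{*}(h)\;<\;+\infty,
\end{equation*}
i.e.\ the functional $u\mapsto (\varphi_1+\varphi_2)(u)-\langle f,u\rangle$ is bounded below on $\mathcal{H}$. For $\varepsilon>0$ consider the strongly convex and coercive problem
\begin{equation*}
F_\varepsilon(u):=\varphi_1(u)+\varphi_2(u)-\langle f,u\rangle+\tfrac{\varepsilon}{2}\|u\|^{2},
\end{equation*}
which admits a unique minimizer $u_\varepsilon$. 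Its Euler--Lagrange condition reads $f-\varepsilon u_\varepsilon\in C u_\varepsilon$, so $f-\varepsilon u_\varepsilon\in \mathcal{R}(C)$. Comparing $F_\varepsilon(u_\varepsilon)$ with $F_\varepsilon(u_0)$ for any fixed $u_0\in D(\varphi_1)\cap D(\varphi_2)\neq\emptyset$ and using the lower bound $(\varphi_1+\varphi_2)(u)-\langle f,u\rangle\geq -(\varphi_1+\varphi_2)^{*}(f)$, one gets a uniform estimate $\varepsilon\|u_\varepsilon\|^{2}\leq K$, whence $\|\varepsilon u_\varepsilon\|^{2}=\varepsilon(\varepsilon\|u_\varepsilon\|^{2})\to 0$. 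Thus $f-\varepsilon u_\varepsilon\to f$ and $f\in\overline{\mathcal{R}(C)}$, proving the first half of \eqref{resl-BH}.

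For the interior inclusion, let $f\in \mathrm{Int}(\mathcal{R}(A)+\mathcal{R}(B))$; a ball $B_r(f)$ lies in $\mathcal{R}(A)+\mathcal{R}(B)\subset \overline{\mathcal{R}(C)}$, so $f\in \mathrm{Int}\,\overline{\mathcal{R}(C)}$. Here I would quote the standard structural fact (e.g.\ from Brezis' monograph on maximal monotone operators) that for any maximal monotone $C$ the set $\overline{\mathcal{R}(C)}$ is convex and $\mathrm{Int}(\mathcal{R}(C))=\mathrm{Int}\,\overline{\mathcal{R}(C)}$; this gives $f\in \mathrm{Int}(\mathcal{R}(C))$. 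Finally, if $A+B$ is itself maximal monotone, then the containment $A+B\subset C$ combined with the fact that $C$ is monotone forces $A+B=C$ by maximality, so $\mathcal{R}(A+B)=\mathcal{R}(C)$. Combining with $\mathcal{R}(A+B)\subset \mathcal{R}(A)+\mathcal{R}(B)\subset \overline{\mathcal{R}(C)}=\overline{\mathcal{R}(A+B)}$ and the interior inclusion yields $\mathcal{R}(A+B)\simeq \mathcal{R}(A)+\mathcal{R}(B)$.

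The main obstacle is the quantitative step: squeezing a sequence in $\mathcal{R}(C)$ that converges to the prescribed $f\in \mathcal{R}(A)+\mathcal{R}(B)$. This rests entirely on the Fenchel duality identity $(\varphi_1+\varphi_2)^{*}(f)\leq \varphi_1^{*}(g)+\varphi_2^{*}(h)$, which is what allows the Tikhonov-type penalty $\tfrac{\varepsilon}{2}\|\cdot\|^{2}$ to produce the right uniform bound on $\varepsilon u_\varepsilon$; without this bound the scheme only yields $f-\varepsilon u_\varepsilon\in\mathcal{R}(C)$ with no control in the limit. The interior statement, by contrast, is almost purely topological once one accepts the convexity of $\overline{\mathcal{R}(C)}$ for maximal monotone $C$, which I would cite rather than re-prove.
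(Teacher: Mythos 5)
Your argument is correct and is essentially the classical Brezis--Haraux proof that the paper itself does not reproduce but simply cites from \cite{BH}: the decomposition $f=g+h$ with $g\in D(\varphi_1^{*})$, $h\in D(\varphi_2^{*})$ gives $(\varphi_1+\varphi_2)^{*}(f)<+\infty$, and the Tikhonov-regularized minimization then exhibits $f$ as a limit of points of $\mathcal{R}(C)$, while the interior statement follows from Rockafellar's virtual convexity of the range of a maximal monotone operator (equivalently, Brezis--Haraux's $\mathrm{int}\,D(\varphi^{*})\subset\mathcal{R}(\partial\varphi)$). Since the paper offers no proof of its own beyond the reference to \cite[pp.~173--174]{BH}, your self-contained reconstruction is a faithful and complete substitute.
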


Here by $\mathcal{R}\left( A\right) +\mathcal{R}\left( B\right) $ we mean%
\begin{eqnarray*}
&&\cup \left\{ Af+Bg:f\in D\left( A\right) ,g\in D\left( B\right) \right\}
\newline
\\
&=&\cup \left\{ h+k:\left( f,h\right) \in A,\left( g,k\right) \in B\text{
for some }f,g\in \mathcal{H}\right\} .
\end{eqnarray*}%
We use the union symbol since $A$ and $B$ may be multi-valued. However, in
our applications, $A$ and $B$ will be single valued.

Let us recall that we want to solve the following problem:%
\begin{equation}
\left\{
\begin{array}{c}
-\Delta u+\alpha _{1}\left( u\right) =f_{1}\left( x\right) \text{ in }\Omega
, \\
b\left( x\right) \frac{\partial u}{\partial n}+c\left( x\right) u-qb\left(
x\right) \Delta _{\Gamma }u+\alpha _{2}\left( u\right) =f_{2}\left( x\right)
\text{ on }\Gamma .%
\end{array}%
\right.  \label{4.5}
\end{equation}

In order to solve (\ref{4.5}), recall that $A$ is the linear operator,
defined in Section 2 (see (\ref{2.12})). More precisely, $A$ has the
following operator representation:%
\begin{equation}
A=\left(
\begin{array}{cc}
-\Delta  & \qquad 0 \\
b\frac{\partial }{\partial n} & \qquad cI-qb\Delta _{\Gamma }%
\end{array}%
\right) .  \label{4.6}
\end{equation}%
Denote the null space of $A$ by $\mathcal{N}\left( A\right) .$ Then $%
U=(u,tr(u))^{T}\in \mathcal{N}\left( A\right) $ if and only if (by
definition) $u$ is a weak solution of%
\begin{equation}
\left\{
\begin{array}{c}
-\Delta u=0\text{ in }\Omega , \\
b\left( x\right) \frac{\partial u}{\partial n}+c\left( x\right) u-qb\left(
x\right) \Delta _{\Gamma }u=0\text{ on }\Gamma ,%
\end{array}%
\right.   \label{4.8}
\end{equation}%
that is, $u\in H^{1}(\Omega )$ with $tr(u)\in H^{1}(\Gamma )$ if $q>0$ and
\begin{equation}
\int_{\Omega }\nabla u\cdot \nabla vdx+\int_{\Gamma }cuv\frac{dS}{b}%
+q\int_{\Gamma }\nabla _{\Gamma }u\cdot \nabla _{\Gamma }vdS=0,
\label{weak-2}
\end{equation}%
for all $v\in H^{1}(\Omega )$ with $tr(v)\in H^{1}(\Gamma )$ if $q>0$. In
this case it is easy to see that $u$ is a weak solution of \eqref{4.8} if
and only if $u\in H^{1}(\Omega )$ with $tr(u)\in H^{1}(\Gamma ),$ if $q>0,$
and \eqref{weak-2} holds for all $v\in H^{1}(\Omega )$ with $tr(v)\in
H^{1}(\Gamma ),$ if $q>0$. Hence, it is clear that the null space of $A$ is $%
\mathcal{N}\left( A\right) =\mathbb{R}\mathbf{1}=\left\{ C\mathbf{1:}\,C\in
\mathbb{R}\right\} $ if $c\equiv 0$ in (\ref{4.8}), that is, $\mathcal{N}%
\left( A\right) $ consists of all the real constant functions on $\overline{%
\Omega }.$ We shall discuss this case first.

From now on, let $A_{1}$ be the linear operator $A$ corresponding to the
case of $c\equiv 0$. Moreover, let $A_{3}$ be the subdifferential $\partial
J $ of the functional $J,$ defined in \eqref{4.1}-(\ref{4.2}), that is, $%
A_{3}:=\partial J=\partial \left( J_{1}+J_{2}\right) $ (see \eqref{J-1}-%
\eqref{4.7}). It follows, from the assumptions on the functions $\alpha
_{1}, $ $\alpha _{2}$ and the results of Section 2, that $A_{i}=\partial
J_{i}$, for each $i=1,2,3$, where each $J_{i}$ is a proper, convex and lower
semicontinuous functional on $\mathbb{X}_{2}$.

Let us recall the Fredholm alternative, which says that for any selfadjoint
operator $B$ with compact resolvent and $0\notin \rho \left( B\right) ,$ we
have that the range $\mathcal{R}\left( B\right) =\overline{\mathcal{R}\left(
B\right) }=\mathcal{N}\left( B\right) ^{\perp }.$ This is the case with our
operator $A_{1},$ that is, we have%
\begin{equation}
\mathcal{R}\left( A_{1}\right) =\mathcal{N}\left( A_{1}\right) ^{\perp }=%
\mathbf{1}^{\perp }=\left\{ F\in \mathbb{X}_{2}:\int\limits_{\overline{%
\Omega }}Fd\mu =0\right\} ,  \label{4.9}
\end{equation}%
where the measure $\mu $ is defined by $d\mu =dx|_{\Omega }\oplus \frac{dS}{b%
}|_{\Gamma }$ on $\overline{\Omega }$. Let us now define $\lambda _{1},$ $%
\lambda _{2}\in \mathbb{R}_{+}$ by%
\begin{equation}
\lambda _{1}=\int\limits_{\Omega }dx,\text{ }\lambda
_{2}=\int\limits_{\Gamma }\frac{dS}{b}  \label{4.10}
\end{equation}%
so that $\mu \left( \overline{\Omega }\right) =\lambda _{1}+\lambda _{2}.$
We also define the average of $F$ with respect to the measure $\mu $, as
follows:%
\begin{equation}
ave_{\mu }\left( F\right) :=\frac{1}{\mu \left( \overline{\Omega }\right) }%
\int\limits_{\overline{\Omega }}Fd\mu =\frac{1}{\mu \left( \overline{\Omega }%
\right) }\left( \int\limits_{\Omega }f_{1}dx+\int\limits_{\Gamma }f_{2}\frac{%
dS}{b}\right) ,  \label{4.11}
\end{equation}%
for every $F=\left( f_{1},f_{2}\right) ^{T}\in \mathbb{X}_{2}.$

We now restate Theorem \ref{main}.

\begin{theorem}
\label{t4.4} Let $\alpha _{i}:\;\mathbb{R}\rightarrow \mathbb{R}$, $i=1,2,$
satisfy \textbf{(H1)}. Let $c\equiv 0$ in (\ref{4.5}) and let $F=\left(
f_{1},f_{2}\right) ^{T}\in \mathbb{X}_{2}$. A necessary condition for the
existence of a weak solution of (\ref{4.5}) is%
\begin{equation}
ave_{\mu }\left( F\right) \in \frac{\lambda _{1}\mathcal{R}\left( \alpha
_{1}\right) +\lambda _{2}\mathcal{R}\left( \alpha _{2}\right) }{\lambda
_{1}+\lambda _{2}},  \label{4.12}
\end{equation}%
while a sufficient condition is that $\alpha _{i}$ satisfies \textbf{(H2)}
and%
\begin{equation}
ave_{\mu }\left( F\right) \in int\left( \frac{\lambda _{1}\mathcal{R}\left(
\alpha _{1}\right) +\lambda _{2}\mathcal{R}\left( \alpha _{2}\right) }{%
\lambda _{1}+\lambda _{2}}\right) .  \label{4.13}
\end{equation}%
Assuming \textbf{(H2)}, the condition (\ref{4.12}) is both necessary and
sufficient when $\lambda _{1}\mathcal{R}\left( \alpha _{1}\right) +\lambda
_{2}\mathcal{R}\left( \alpha _{2}\right) $ is open, which holds if at least
one of $\mathcal{R}\left( \alpha _{1}\right) ,$ $\mathcal{R}\left( \alpha
_{2}\right) $ is open.
\end{theorem}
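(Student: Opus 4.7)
The plan is to recast problem \eqref{4.5} as the abstract equation $A_3 U = F$ in $\mathbb{X}_2$, where $A_3 = \partial J = A_1 + A_2$, with $A_1$ the linear Wentzell Laplacian of Section 3 restricted to $c \equiv 0$ and $A_2$ the Nemytskii-type operator $(u,v)^T \mapsto (\alpha_1(u),\alpha_2(v))^T$. Necessity will follow by testing the weak formulation against the constant function $v \equiv 1 \in \mathcal{N}(A_1)$; sufficiency will use the Brezis--Haraux result (Theorem \ref{T2}) to identify $\mathcal{R}(A_3)$, up to closure and interior, with the explicit set $\mathcal{R}(A_1) + \mathcal{R}(A_2)$, followed by a direct computation of the latter.

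For the necessary condition, take $v \equiv 1$ in Definition \ref{def-weak-sol}; this is an admissible test function (smooth, with trace in $H^1(\Gamma)$) in both the $q=0$ and $q>0$ cases. All gradient terms, and the $cuv$ term (since $c \equiv 0$), vanish, leaving
\begin{equation*}
\int_\Omega f\,dx + \int_\Gamma g\,\frac{dS}{b} = \int_\Omega \alpha_1(u)\,dx + \int_\Gamma \alpha_2(u)\,\frac{dS}{b}.
\end{equation*}
Each $\mathcal{R}(\alpha_i)$ is an interval of $\mathbb{R}$ containing $0$, and a short extremality argument shows that the averages $\lambda_i^{-1}\int \alpha_i(u)$ cannot equal a non-attained endpoint of $\mathcal{R}(\alpha_i)$ (the only way an average of values in $\mathcal{R}(\alpha_i)$ could attain such an endpoint would be via a pointwise constancy contradicting the definition of $\mathcal{R}(\alpha_i)$), so each average lies in $\mathcal{R}(\alpha_i)$ itself. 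Dividing the displayed equation by $\lambda_1+\lambda_2$ and rewriting the right-hand side as a convex combination of these averages yields \eqref{4.12}.

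For the sufficient condition, the subdifferential computations of Section 4 already establish that \eqref{4.5} is equivalent to $A_3 U = F$ with $A_3 = \partial J$ maximal monotone and $A_3 = A_1 + A_2$. Theorem \ref{T2} then gives $\mathcal{R}(A_3) \simeq \mathcal{R}(A_1) + \mathcal{R}(A_2)$. Combining the Fredholm alternative \eqref{4.9} $\mathcal{R}(A_1) = \mathbf{1}^\perp$ with the observation that constants $(\beta_1,\beta_2)$ with $\beta_i \in \mathcal{R}(\alpha_i)$ already lie in $\mathcal{R}(A_2)$ (while any element of $\mathcal{R}(A_2)$ is $(g_1,g_2)$ with $g_i$ a.e.\ in $\mathcal{R}(\alpha_i)$), a direct decomposition gives
\begin{equation*}
\mathcal{R}(A_1) + \mathcal{R}(A_2) = \Bigl\{ F \in \mathbb{X}_2 : \int_{\overline{\Omega}} F\,d\mu \in \widetilde{I}\Bigr\}.
\end{equation*}
Since the linear functional $F \mapsto \int F\,d\mu$ is continuous and surjective onto $\mathbb{R}$ (hence open), the interior of the above set is exactly $\{F : \int F\,d\mu \in \mathrm{int}(\widetilde{I})\}$. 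Condition \eqref{4.13} thus places $F$ in $\mathrm{int}(\mathcal{R}(A_3)) \subset \mathcal{R}(A_3)$, yielding a weak solution. The closing assertion is immediate: if some $\mathcal{R}(\alpha_i)$ is open then so is the Minkowski sum $\widetilde{I}$, and then $\mathrm{int}(\widetilde{I}) = \widetilde{I}$, so \eqref{4.12} and \eqref{4.13} coincide.

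The main obstacle I anticipate is justifying the identity $\partial J = \partial J_1 + \partial J_2$ on the effective domain $\mathbb{D}_q$, which is what licenses the application of Theorem \ref{T2} to $A_3 = A_1 + A_2$; this is precisely where hypothesis \textbf{(H2)} must be used, both to ensure $\mathbb{D}_q$ is a vector space and to make the Young-type inequalities \eqref{RR-ine}--\eqref{RR-ega} available for controlling the cross terms that arise when one computes the subdifferential of the sum.
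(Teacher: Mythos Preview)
Your overall strategy is right, and your computation of $\mathcal{R}(A_1)+\mathcal{R}(A_2)$ together with the open-mapping argument for its interior is a clean alternative to the paper's explicit $\varepsilon$-ball construction. The necessity argument by testing against $v\equiv 1$ is also exactly what the paper does.

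However, the obstacle you flag at the end is a genuine gap in your argument as written, and it is also unnecessary. You invoke the almost-equality $\mathcal{R}(A_3)\simeq\mathcal{R}(A_1)+\mathcal{R}(A_2)$ from \eqref{resl-BH-2}, which requires $A_1+A_2$ to be maximal monotone (equivalently, $\partial(J_1+J_2)=\partial J_1+\partial J_2$). This identity is \emph{not} established in Section 4 under \textbf{(H1)}--\textbf{(H2)} alone; the paper proves it only later (Proposition \ref{theo-equ}) under the additional growth conditions \textbf{(GC1)}--\textbf{(GC2)}, and explicitly remarks that those conditions are not needed for Theorem \ref{t4.4}.

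The point is that you never need $A_3=A_1+A_2$. The first half of Theorem \ref{T2}, namely \eqref{resl-BH}, applies directly with $C=\partial(J_1+J_2)=A_3$ and gives
\[
\mathrm{int}\bigl(\mathcal{R}(A_1)+\mathcal{R}(A_2)\bigr)\subset \mathrm{int}\bigl(\mathcal{R}(A_3)\bigr)\subset \mathcal{R}(A_3),
\]
with no hypothesis beyond $A_i=\partial J_i$ and $D(J_1)\cap D(J_2)\neq\emptyset$. Your own computation then shows that \eqref{4.13} is exactly the condition $F\in\mathrm{int}(\mathcal{R}(A_1)+\mathcal{R}(A_2))$, and you are done. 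So drop the claim $A_3=A_1+A_2$, cite \eqref{resl-BH} instead of \eqref{resl-BH-2}, and the proof goes through; hypothesis \textbf{(H2)} is used only to make $\mathbb{D}_q$ a vector space so that the subdifferential of $J$ can be identified as in \eqref{J-1}--\eqref{J-2}.
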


\begin{proof}
Let $\alpha _{i}:\;\mathbb{R}\rightarrow \mathbb{R}$, $i=1,2,$ satisfy
\textbf{(H1)}. Let $F=\left( f_{1},f_{2}\right) ^{T}\in \mathbb{X}_{2}$ be
given and let $u$ be a weak solution of (\ref{4.5}) with $c\equiv 0$. Then
(see Definition \ref{def-weak-sol}), $u\in H^{1}(\Omega )$, $\alpha
_{1}(u)\in L^{1}(\Omega ),$ $\alpha _{2}(tr(u))\in L^{1}(\Gamma )$, $%
tr(u)\in H^{1}(\Gamma )$ if $q>0$ and
\begin{align}
\int_{\Omega }f_{1}vdx+\int_{\Gamma }f_{2}v\frac{dS}{b}& =\int_{\Omega
}\nabla u\cdot \nabla vdx  \label{eq-weak-solu-2} \\
+\int_{\Omega }\alpha _{1}(u)vdx& +\int_{\Gamma }\alpha _{2}(u)v\frac{dS}{b}%
+q\int_{\Gamma }\nabla _{\Gamma }u\cdot \nabla _{\Gamma }vdS,  \notag
\end{align}%
for all $v\in H^{1}(\Omega )\cap C\left( \overline{\Omega }\right) ,$ if $%
q=0,$ and all $v\in H^{1}(\Omega )\cap C\left( \overline{\Omega }\right) $
with $tr(v)\in H^{1}(\Gamma ),$ if $q>0$. Taking $v=1$ in (\ref%
{eq-weak-solu-2}), we obtain%
\begin{align*}
\int_{\bar{\Omega}}Fd\mu =\int\limits_{\Omega }f_{1}dx+\int\limits_{\Gamma
}f_{2}\frac{dS}{b}& =\int\limits_{\Omega }\alpha _{1}\left( u\right)
dx+\int\limits_{\Gamma }\alpha _{2}\left( u\right) \frac{dS}{b} \\
& \in \left( \lambda _{1}\mathcal{R}\left( \alpha _{1}\right) +\lambda _{2}%
\mathcal{R}\left( \alpha _{2}\right) \right) ,
\end{align*}%
and so (\ref{4.12}) holds. This proves the necessity.\newline

For the sufficiency, let (\ref{4.13}) hold and assume that $\alpha _{i}$
satisfies \textbf{(H2)}. To show that \eqref{4.5}, with $c\equiv 0,$ has a
weak solution $u$, it is enough to prove that $F:=(f_{1},f_{2})\in \mathcal{R%
}(A_{3})$. To this end, we will make use of \eqref{resl-BH} from Theorem \ref%
{T2} to show that $F\in int(\mathcal{R}(A_{1})+\mathcal{R}(A_{2}))\subset
\mathcal{R}(A_{3})$. We know that $-A_{1},$ $-A_{2}$ and $-A_{3}$ are
m-dissipative on $\mathbb{X}_{2}$ and $A_{i}=\partial J_{i},$ for every $%
i=1,2,3,$ where each $J_{i},$ $i=2,3,$ is a proper, convex and lower
semicontinuous functional on $\mathbb{X}_{2}.$ Here, $J_{3}=J_{1}+J_{2}$ has
the effective domain $D(J_{3})=D(J_{1})\cap D(J_{2})\neq \emptyset $.

Let $c_{1},$ $c_{2}\in \mathbb{R}$, $C=\left( c_{1},c_{2}\right) ^{T}\in
\mathbb{X}_{2}$ and let $\mathcal{C}$ be the family of such vectors $C$ in $%
\mathbb{X}_{2}$. Let
\begin{equation*}
Q:=\left\{ C\in \mathcal{C}:c_{i}\in \mathcal{R}\left( \alpha _{i}\right) ,%
\text{ }i=1,2\right\} .
\end{equation*}%
Clearly $Q\subset \mathcal{R}\left( A_{2}\right) ,$ since $c_{i}=\alpha
_{i}\left( d_{i}\right) $ for some constant function $d_{i}$ on $\Omega $
(if $i=1$) or on $\Gamma $ (if $i=2$). Now let (\ref{4.13}) hold for $F\in
\mathbb{X}_{2}$. We must show $F\in \mathcal{R}\left( A_{3}\right) .$ By %
\eqref{4.13} we may choose $C=\left( c_{1},c_{2}\right) ^{T}\in Q$ such that%
\begin{equation*}
ave_{\mu }\left( F\right) =\frac{\lambda _{1}c_{1}+\lambda _{2}c_{2}}{%
\lambda _{1}+\lambda _{2}}\in int\left( \frac{\lambda _{1}\mathcal{R}\left(
\alpha _{1}\right) +\lambda _{2}\mathcal{R}\left( \alpha _{2}\right) }{%
\lambda _{1}+\lambda _{2}}\right) ,
\end{equation*}%
where $\lambda _{1},\lambda _{2}$ are given by (\ref{4.10}). Then, for $F\in
\mathbb{X}_{2},$ we have%
\begin{equation*}
F=\left[ F-C\right] +C.
\end{equation*}%
First, $F-C\in \mathcal{R}\left( A_{1}\right) =\mathcal{N}\left(
A_{1}\right) ^{\perp }=\mathbf{1}^{\perp },$ since%
\begin{align*}
\int\limits_{\overline{\Omega }}\left( F-C\right) d\mu & =\int\limits_{%
\overline{\Omega }}Fd\mu -\left( \lambda _{1}c_{1}+\lambda _{2}c_{2}\right)
\\
& =\int\limits_{\overline{\Omega }}\left[ F-ave_{\mu }\left( F\right) \right]
d\mu =0.
\end{align*}%
Next, clearly $C\in \mathcal{R}\left( A_{2}\right) .$ Thus, it is readily
seen that $F\in \left( \mathcal{R}\left( A_{1}\right) +\mathcal{R}\left(
A_{2}\right) \right) $. Let now $\varepsilon >0$ be given. We want $%
\varepsilon >0$ to be small enough, in particular, suppose%
\begin{equation*}
0<\varepsilon <\frac{1}{2}dist\left( \frac{\lambda _{1}c_{1}+\lambda
_{2}c_{2}}{\lambda _{1}+\lambda _{2}},\mathbf{K}\right) ,
\end{equation*}%
where $\mathbf{K}$ consists of the endpoints of the interval $\widetilde{I}%
=\left( \lambda _{1}\mathcal{R}\left( \alpha _{1}\right) +\lambda _{2}%
\mathcal{R}\left( \alpha _{2}\right) \right) /\left( \lambda _{1}+\lambda
_{2}\right) .$ Let $\widetilde{F}=(\widetilde{f_{1}},\widetilde{f_{2}}%
)^{T}\in \mathbb{X}_{2}$ satisfy $\left\Vert F-\widetilde{F}\right\Vert _{%
\mathbb{X}_{2}}<\varepsilon .$ We want to pick $\widetilde{C}=\left(
\widetilde{c}_{1},\widetilde{c}_{2}\right) ^{T}\in Q$ such that%
\begin{equation}
\left\Vert C-\widetilde{C}\right\Vert _{\mathbb{X}_{2}}<\varepsilon \text{
and }ave_{\mu }\left( \widetilde{F}\right) =\frac{\lambda _{1}\widetilde{c}%
_{1}+\lambda _{2}\widetilde{c}_{2}}{\lambda _{1}+\lambda _{2}}.  \label{est1}
\end{equation}%
To see how to do this, let $\mathcal{J}_{i}=\mathcal{R}\left( \alpha
_{i}\right) $ for $i=1,2.$ Then $c_{i}\in \mathcal{J}_{i}$ and%
\begin{equation}
ave_{\mu }\left( F\right) =\frac{\lambda _{1}c_{1}+\lambda _{2}c_{2}}{%
\lambda _{1}+\lambda _{2}}\in int\left( \frac{\lambda _{1}\mathcal{J}%
_{1}+\lambda _{2}J_{2}}{\lambda _{1}+\lambda _{2}}\right) .  \label{est2}
\end{equation}%
We may choose at least one of $\widetilde{c}_{1},$ $\widetilde{c}_{2},$ call
it $\tilde{c}_{k},$ to be less than $c_{k},$ because $c_{k}$ cannot be the
left hand end point of $\mathcal{J}_{k}$ for both $k=1,2,$ because of (\ref%
{4.13}). In a similar way, we may choose one of $\widetilde{c}_{1},$ $%
\widetilde{c}_{2},$ call it $\widetilde{c}_{l},$ to be larger than $c_{l}.$
Next,%
\begin{equation*}
\left\vert ave_{\mu }\left( F\right) -ave_{\mu }\left( \widetilde{F}\right)
\right\vert \leq \left\Vert F-\widetilde{F}\right\Vert _{\mathbb{X}%
_{2}}<\varepsilon ,
\end{equation*}%
by the Schwarz inequality. By this observation and (\ref{est1})-(\ref{est2}%
), we can find $\widetilde{C}=\left( \widetilde{c}_{1},\widetilde{c}%
_{2}\right) \in Q$ such that (\ref{est1}) holds. Thus, $\left( \mathcal{R}%
\left( A_{1}\right) +\mathcal{R}\left( A_{2}\right) \right) $ contains an $%
\varepsilon $-ball in $\mathbb{X}_{2},$ centered at $F,$ for sufficiently
small $\varepsilon >0.$ Thus,%
\begin{equation*}
F\in int\left( \mathcal{R}\left( A_{1}\right) +\mathcal{R}\left(
A_{2}\right) \right) \subset int\left( \mathcal{R}\left( A_{3}\right)
\right) \subset \mathcal{R}\left( A_{3}\right) ,
\end{equation*}%
by \eqref{resl-BH}. Consequently,\ problem (\ref{4.5}) is (weakly) solvable
in the sense of Definition \ref{def-weak-sol}, for any $f_{1}\in L^{2}\left(
\Omega \right) ,$ $f_{2}\in L^{2}\left( \Gamma \right) ,$ if (\ref{4.13})
holds. This completes the proof.
\end{proof}

We will now give some examples as applications of Theorem \ref{t4.4}.

\begin{example}
\label{e4.5}Let $\alpha _{1}\left( s\right) $ or $\alpha _{2}\left( s\right)
$ be equal to $\alpha \left( s\right) =r\left\vert s\right\vert ^{p-1}s,$
where $r,$ $p>0$. Then, it is clear that $\alpha $ satisfies \textbf{(H1)}
and that $L(s)=\Lambda (s)=\frac{r}{p+1}|s|^{p+1}$ also satisfies \textbf{%
(H2)}. Note that $\mathcal{R}\left( \alpha \right) =\mathbb{R}$. Then, it
follows that problem (\ref{4.5}) with $c\equiv 0$ is solvable for any $%
f_{1}\in L^{2}\left( \Omega \right) ,$ $f_{2}\in L^{2}\left( \Gamma \right) $%
.
\end{example}

\begin{example}
\label{e4.6}Consider the case when $c=q=\alpha _{2}\equiv 0$ in (\ref{4.5}),
that is, consider the following boundary value problem:%
\begin{equation*}
\left\{
\begin{array}{c}
-\Delta u+\alpha _{1}\left( u\right) =f_{1}\left( x\right) \text{ in }\Omega
, \\
b\left( x\right) \frac{\partial u}{\partial n}=f_{2}\left( x\right) \text{
on }\Gamma .%
\end{array}%
\right.
\end{equation*}%
Then, by Theorem \ref{t4.4}, this problem has a weak solution if%
\begin{equation*}
\int\limits_{\Omega }f_{1}dx+\int\limits_{\Gamma }f_{2}\frac{dS}{b}\in
\lambda _{1}int\left( \mathcal{R}\left( \alpha _{1}\right) \right) ,
\end{equation*}%
which yields the classical Landesman-Lazer result (see (\ref{1.4})) for $%
f_{2}\equiv 0$.
\end{example}

\begin{example}
\label{e4.7}Let us now consider the case when $\alpha _{1}\equiv 0$ and $%
\alpha _{2}\equiv \alpha ,$ where $\alpha $ is a continuous, monotone
nondecreasing function on $\mathbb{R}$ such that $\alpha \left( 0\right) =0$%
. The problem%
\begin{equation}
\left\{
\begin{array}{c}
-\Delta u=f_{1}\left( x\right) \text{ in }\Omega , \\
b\left( x\right) \frac{\partial u}{\partial n}-qb\left( x\right) \Delta
_{\Gamma }u+\alpha \left( u\right) =f_{2}\left( x\right) \text{ on }\Gamma ,%
\end{array}%
\right.  \label{e}
\end{equation}%
has a weak solution if%
\begin{equation}
\int\limits_{\Omega }f_{1}dx+\int\limits_{\Gamma }f_{2}\frac{dS}{b}\in
\lambda _{2}int\left( \mathcal{R}\left( \alpha \right) \right) .  \label{ee}
\end{equation}%
For example, if we choose $\alpha \left( s\right) =\arctan \left( s\right) $
in (\ref{e}), (\ref{ee}) becomes the necessary and sufficient condition%
\begin{equation}
\left\vert \frac{1}{\lambda _{2}}\left( \int\limits_{\Omega
}f_{1}dx+\int\limits_{\Gamma }f_{2}\frac{dS}{b}\right) \right\vert <\frac{%
\pi }{2}.
\end{equation}%
Note that $\alpha (s)=\arctan (s)$ satisfies \textbf{(H1)} and that $%
L_{2}(s)=\Lambda _{2}(s)=s\arctan (s)-\ln \sqrt{1+s^{2}}$ satisfies \textbf{%
(H2)}.
\end{example}

Let us now turn to the case when $c>0$ on a set of positive $dS$-measure
(that is, $c\left( x\right) $ is not identically zero on the boundary $%
\Gamma $) and consider $A_{1}^{1}$ to be the linear operator $A$ of (\ref%
{2.12}) corresponding to this case. Since $A_{1}^{1}=\left( A_{1}^{1}\right)
^{\ast }\geq 0$ and $A_{1}^{1}$ has compact resolvent, it has a ground state
$Z=\left( z_{\mid \Omega },z_{\mid \Gamma }\right) ^{T}.$ That is, $\lambda
=\min \sigma \left( A_{1}^{1}\right) $ is a simple eigenvalue, $\lambda >0,$
and $\mathcal{N}\left( A_{1}^{1}-\lambda \right) =\left\{ CZ:C\in \mathbf{R}%
\right\} $ for some positive function $Z$ on $\overline{\Omega }$.

Before proceeding further, we find the ground state of $A_{1}^{1}$ in a
simple one-dimensional example. Let $\Omega =\left( 0,1\right) ,$ $\Gamma
=\left\{ 0,1\right\} $, $b_{0}=b_{1}=1,$ $q=0$ and $c_{0},c_{1}$ will be
specified in the sequel. Here $b_{j}=b\left( j\right) $ and $c_{j}=c\left(
j\right) .$ We will choose $c_{j}$, $j=0,1$ so that the smallest eigenvalue
of $A_{1}^{1}$ is $\lambda =1.$ The required positive solution of $%
z^{^{\prime \prime }}+z=0$ has the form $z\left( x\right) =\cos \left(
x-\delta \right) $ (times a constant, which we take to be $1$). We need to
choose $\delta $ so that $z>0$ in $\left[ 0,1\right] $ and choose $c_{0},$ $%
c_{1}$ such that $z$ satisfies the correct boundary conditions. The boundary
conditions are%
\begin{equation}
-z\left( j\right) +\left( -1\right) ^{j+1}z^{^{\prime }}\left( j\right)
+c_{j}z\left( j\right) =0,  \label{BC1}
\end{equation}%
for $j=0,1,$ since $\partial /\partial n=\left( -1\right) ^{j+1}\partial
/\partial x$ and $z^{^{\prime \prime }}\left( j\right) =-z\left( j\right) $
at $x=j\in \left\{ 0,1\right\} $. Since $z\left( 0\right) =\cos \left(
\delta \right) ,$ $z^{^{\prime }}\left( 0\right) =\sin \left( \delta \right)
,$ $z\left( 1\right) =\cos \left( 1-\delta \right) $ and $z^{^{\prime
}}\left( 1\right) =\sin \left( \delta -1\right) .$ Then (\ref{BC1}) implies%
\begin{equation}
c_{0}=1+\tan \left( \delta \right) ,\text{ }c_{1}=1+\tan \left( 1-\delta
\right) .  \label{cc}
\end{equation}%
For $\delta \in \left( 0.4,0.6\right) ,$ $c_{0}$ and $c_{1}$ are both
positive. Next, for $x\in \left[ 0,1\right] ,$ we have $\left( x-\delta
\right) \in \left( -1,1\right) \subset \left( -\frac{\pi }{2},\frac{\pi }{2}%
\right) ,$ whence $z$ is positive on $\left[ 0,1\right] .$ Moreover, for $%
x\in \left[ 0,1\right] ,$ $x-\delta \in \left[ -\delta ,1-\delta \right] ,$
then choosing $\delta =1/2,$ we have $\left\vert x-1/2\right\vert \leq 1/2$,
$\cos \left( x-1/2\right) \in \left[ \cos \frac{1}{2},1\right] $ and $%
c_{0}=c_{1}=1+\tan \left( 1/2\right) .$

Finally, we can use the above results to prove our first result for a
similar elliptic problem to (\ref{4.5}) in this new case. As an application
of \eqref{resl-BH} (see Theorem \ref{T2}), we obtain the following.

\begin{theorem}
\label{t4.8} Let $c$ be a nonnegative function which is positive on $\Gamma
_{1}\subset \Gamma ,$ where $\int\limits_{\Gamma _{1}}dS>0$. Let $q=0$ and
let $\alpha $ be a continuous, monotone nondecreasing function on $\mathbb{R}
$ such that $\alpha \left( 0\right) =0$, $\alpha \left( \pm \infty \right) =%
\underset{s\rightarrow \pm \infty }{\lim }\alpha \left( s\right) $. Let $%
F=\left( f_{1},f_{2}\right) ^{T}\in \mathbb{X}_{2}.$ Also, suppose that $%
\lambda >0$ is the smallest eigenvalue of $A_{1}^{1}$ and let $Z$ be a
positive member of the one-dimensional eigenspace of $A_{4}:=A_{1}^{1}-%
\lambda I.$ Here we view $Z\in \mathbb{X}_{2}$ as $Z=(z_{1},z_{2})^{T}:%
\overline{\Omega }\rightarrow \mathbb{R}$, and $Z$ corresponds to a $%
z_{1}\in C(\overline{\Omega })$, with $z_{2}=z_{1}|_{\Gamma }$ and $z_{1}$
is a positive function on $\overline{\Omega }.$ A necessary condition for
the existence of a weak solution for%
\begin{equation}
\left\{
\begin{array}{c}
-\Delta u-\lambda u+\alpha \left( u\right) =f_{1}\text{ in }\Omega , \\
\Delta u+b\left( x\right) \frac{\partial u}{\partial n}+\left( c\left(
x\right) +\lambda \right) u=f_{2}\text{ on }\Gamma%
\end{array}%
\right.  \label{P2}
\end{equation}%
is%
\begin{equation}
\alpha \left( -\infty \right) \left\langle Z,\boldsymbol{1}\right\rangle _{%
\mathbb{X}_{2}}\leq \left\langle F,Z\right\rangle _{\mathbb{X}_{2}}\leq
\alpha \left( +\infty \right) \left\langle Z,\mathbf{1}\right\rangle _{%
\mathbb{X}_{2}},  \label{Nec}
\end{equation}%
while a sufficient condition is that $\alpha $ satisfies \textbf{(H2)} and%
\begin{equation}
\frac{\alpha \left( -\infty \right) }{\min Z}<\left\langle F,Z\right\rangle
_{\mathbb{X}_{2}}<\frac{\alpha \left( +\infty \right) }{\max Z}.  \label{Suf}
\end{equation}
\end{theorem}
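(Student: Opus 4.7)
The strategy is to recast the problem as the operator equation $A_{4}U + \widetilde{A}_{2}U = F$ on $\mathbb{X}_{2}$, where $\widetilde{A}_{2}U := (\alpha(u_{1}),\alpha(u_{2}))^{T}$ is the $A_{2}$ of Section~4 specialized to $\alpha_{1}=\alpha_{2}=\alpha$, and then to apply the Brezis--Haraux machinery of Theorem~\ref{T2} to $A_{3} := A_{4} + \widetilde{A}_{2}$. Since $A_{1}^{1}$ is selfadjoint and nonnegative with compact resolvent and $\lambda = \min\sigma(A_{1}^{1}) > 0$ is a simple eigenvalue with strictly positive eigenvector $Z$, the shift $A_{4}=A_{1}^{1}-\lambda I$ is again selfadjoint and nonnegative, has compact resolvent, and satisfies $\mathcal{N}(A_{4}) = \mathbb{R}Z$; the selfadjoint Fredholm alternative then gives $\mathcal{R}(A_{4}) = Z^{\perp}$. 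Both $A_{4}$ and $\widetilde{A}_{2}$ are subdifferentials of proper, convex, lower semicontinuous functionals on $\mathbb{X}_{2}$, and under \textbf{(H2)} on $\alpha$ their sum $A_{3}$ is again such a subdifferential and hence maximal monotone, so the conclusion \eqref{resl-BH-2} of Theorem~\ref{T2} is available.

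For the necessity of \eqref{Nec}, I would test the weak formulation of \eqref{P2} against the positive ground state $z_{1}$. The regularity results of Section~3 guarantee $z_{1}\in H^{2}(\Omega)$, so $z_{1}$ is an admissible test function in the sense of Definition~\ref{def-weak-sol}. Using $A_{1}^{1}Z = \lambda Z$ and the symmetry of the bilinear form $\varrho$, the linear contributions collapse via $\varrho(U,Z) = \lambda\langle U,Z\rangle_{\mathbb{X}_{2}}$, so that the $\pm\lambda u$ terms cancel and only $\langle \widetilde{A}_{2}U,Z\rangle_{\mathbb{X}_{2}} = \langle F,Z\rangle_{\mathbb{X}_{2}}$ survives. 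Combining the pointwise bounds $\alpha(-\infty)\le \alpha(u)\le \alpha(+\infty)$ with $Z>0$ on $\overline{\Omega}$ yields \eqref{Nec}.

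For the sufficiency, it suffices to show $F\in \mathrm{int}(\mathcal{R}(A_{4}) + \mathcal{R}(\widetilde{A}_{2}))$, since by \eqref{resl-BH} this lies in $\mathrm{int}(\mathcal{R}(A_{3}))\subset \mathcal{R}(A_{3})$. Following the template of Theorem~\ref{t4.4}, I would split $F = (F-C) + C$ with $C\in \mathcal{R}(\widetilde{A}_{2})$ chosen so that $\langle C,Z\rangle_{\mathbb{X}_{2}} = \langle F,Z\rangle_{\mathbb{X}_{2}}$, which automatically places $F-C\in Z^{\perp} = \mathcal{R}(A_{4})$. Because $\widetilde{A}_{2}$ admits any pointwise values in $\mathcal{R}(\alpha)$, the set of achievable values of $\langle C,Z\rangle_{\mathbb{X}_{2}}$ is an open interval whose endpoints are governed by $\alpha(\pm\infty)$ weighted against the extremes of $Z$; the hypothesis \eqref{Suf} is tailored so that $\langle F,Z\rangle_{\mathbb{X}_{2}}$ sits strictly inside this interval. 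A small $\mathbb{X}_{2}$-perturbation $\widetilde{F}$ of $F$ can then be decomposed via a nearby $\widetilde{C}$, giving the $\varepsilon$-ball inclusion and hence $F\in \mathrm{int}(\mathcal{R}(A_{4})+\mathcal{R}(\widetilde{A}_{2}))$.

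I expect the main obstacle to be the careful matching of the weighted bounds $\alpha(-\infty)/\min Z$ and $\alpha(+\infty)/\max Z$ in \eqref{Suf} to the natural interval of admissible values of $\langle C,Z\rangle_{\mathbb{X}_{2}}$; this requires estimating pointwise contributions of $C$ to the inner product in terms of $\min Z$ and $\max Z$ on $\overline{\Omega}$ and then verifying, via Cauchy--Schwarz, that the strict inequalities in \eqref{Suf} are stable under $\mathbb{X}_{2}$-perturbations of size $\varepsilon\|Z\|_{\mathbb{X}_{2}}$. Once this is in place, the remainder duplicates the $\varepsilon$-ball construction at the end of the proof of Theorem~\ref{t4.4}, yielding $F\in \mathrm{int}(\mathcal{R}(A_{4})+\mathcal{R}(\widetilde{A}_{2}))\subset \mathcal{R}(A_{3})$, which is the desired weak solvability of \eqref{P2}.
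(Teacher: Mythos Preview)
Your overall architecture (Fredholm alternative for $A_{4}$, Brezis--Haraux via Theorem~\ref{T2}, decomposition $F=(F-C)+C$, $\varepsilon$-ball argument) matches the paper's. The necessity argument is essentially identical to the paper's: test against $Z$, use $A_{1}^{1}Z=\lambda Z$ to cancel the linear part, and read off \eqref{Nec} from the positivity of $Z$ and the bounds $\alpha(-\infty)\le\alpha\le\alpha(+\infty)$.

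There is, however, a genuine gap in your sufficiency set-up. In \eqref{P2} the nonlinearity $\alpha$ appears \emph{only} in the interior equation; the boundary condition $\Delta u+b\,\partial_{n}u+(c+\lambda)u=f_{2}$ is linear. Your operator $\widetilde{A}_{2}U=(\alpha(u_{1}),\alpha(u_{2}))^{T}$, i.e.\ the $A_{2}$ of Section~4 with $\alpha_{1}=\alpha_{2}=\alpha$, inserts $\alpha(u)$ into the $\Gamma$-component as well. Hence the equation $A_{4}U+\widetilde{A}_{2}U=F$ is \emph{not} equivalent to \eqref{P2}; it is a different problem with an extra $\alpha(u)$ on the boundary. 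Showing $F\in\mathcal{R}(A_{4}+\widetilde{A}_{2})$ therefore does not produce a weak solution of \eqref{P2}. The paper handles this by introducing instead the operator $A_{5}U=(\alpha(u),0)^{T}$, with zero second component, and works with $A_{6}:=\partial(J_{4}+J_{5})$, where $J_{5}(U)=\int_{\Omega}L(u)\,dx$ carries no boundary term.

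A second, smaller difference is in the decomposition. Rather than choosing a general $C$ with $\langle C,Z\rangle_{\mathbb{X}_{2}}=\langle F,Z\rangle_{\mathbb{X}_{2}}$, the paper takes the specific orthogonal projection $C=\langle F,Z\rangle_{\mathbb{X}_{2}}\,Z$. Then $F-C\in Z^{\perp}=\mathcal{R}(A_{4})$ is automatic, and the requirement $C\in\mathcal{R}(A_{5})$ becomes the \emph{pointwise} condition $\alpha(-\infty)<\langle F,Z\rangle_{\mathbb{X}_{2}}\,Z(x)<\alpha(+\infty)$ on $\overline{\Omega}$, which is precisely \eqref{Suf} after comparing with $\min Z$ and $\max Z$. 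This explicit choice is what makes the link to the stated sufficient condition transparent; your description of ``an open interval whose endpoints are governed by $\alpha(\pm\infty)$ weighted against the extremes of $Z$'' is on the right track but does not yet pin down why the particular quantities $\alpha(-\infty)/\min Z$ and $\alpha(+\infty)/\max Z$ arise.
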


\begin{proof}
For the necessity part, multiply the first equation of (\ref{P2}), the
second equation of (\ref{P2}) by $z$ and integrate by parts; here $Z=\left(
z|_{\Omega },z|_{\Gamma }\right) ^{T}.$ Using the divergence theorem and the
fact that $\mathcal{N}\left( A_{1}^{1}-\lambda \right) =span\left\{
Z\right\} ,$ we obtain%
\begin{equation*}
\int\limits_{\Omega }\alpha \left( u\right) zdx+\int\limits_{\Gamma }\alpha
\left( v\right) z_{\mid \Gamma }\frac{dS}{b}=\int\limits_{\Omega
}f_{1}zdx+\int\limits_{\Gamma }f_{2}z_{\mid \Gamma }\frac{dS}{b},
\end{equation*}%
where $U=\left( u,v\right) ^{T}$ with $v=tr\left( u\right) $ is the solution
of (\ref{P2}) with $F=\left( f_{1},f_{2}\right) ^{T}$. Since $Z>0$, this
equation becomes%
\begin{equation*}
\frac{\left\langle F,Z\right\rangle _{\mathbb{X}_{2}}}{\left\langle Z,%
\boldsymbol{1}\right\rangle _{\mathbb{X}_{2}}}=\frac{\left\langle \alpha
,Z\right\rangle _{\mathbb{X}_{2}}}{\left\langle Z,\boldsymbol{1}%
\right\rangle _{\mathbb{X}_{2}}}\in \left[ \alpha \left( -\infty \right)
,\alpha \left( +\infty \right) \right] ,
\end{equation*}%
and the necessary condition (\ref{Nec}) follows. If $\alpha \left( -\infty
\right) <\alpha \left( r\right) $ for all $r\in \mathbb{R}$, then the
endpoint $\alpha \left( -\infty \right) $ can be excluded. A similar remark
applies to $\alpha \left( +\infty \right) .$

The sufficiency proof is like that of Theorem \ref{t4.4}, but $Z$ is not a
constant. By the Fredholm alternative, we have%
\begin{equation}
\mathcal{R}\left( A_{4}\right) =\mathcal{N}\left( A_{4}\right) ^{\perp
}=\left\{ F\in \mathbb{X}_{2}:\left\langle F,Z\right\rangle _{\mathbb{X}%
_{2}}=0\right\} .  \label{FA}
\end{equation}

Let us also define the nonlinear operator $A_{5}U=\left( \alpha \left(
u\right) ,0\right) ^{T},$ for $\left( u,v\right) ^{T}\in D\left(
A_{5}\right) $ such that%
\begin{equation}
D\left( A_{5}\right) =\left\{ \left( u,v\right) ^{T}\in \mathbb{X}_{2}:u%
\text{ has a trace }tr\left( u\right) =v\text{ and }\left( \alpha \left(
u\right) ,0\right) ^{T}\in \mathbb{X}_{2}\right\} .
\end{equation}

Let us recall that, due to Theorem \ref{t4.1}, we know that $-A_{4},$ $%
-A_{5},$ are m-dissipative on $\mathbb{X}_{2}$ and $A_{i}=\partial J_{i},$
for every $i=4,5$ and each $J_{i}$ is a proper, convex and lower
semicontinuous functional on $\mathbb{X}_{2}$. Let $J_{6}:=J_{4}+J_{5}$ with
domain $D(J_{6}):=D(J_{4})\cap D(J_{5})\neq \emptyset .$ Then $J_{6}$ is a
proper, convex and lower semicontinuous functional on $\mathbb{X}_{2}$. Let $%
A_{6}:=\partial (J_{4}+J_{5})$. Then $-A_{6}$ is m-dissipative on $\mathbb{X}%
_{2}$. It follows, from \eqref{resl-BH}, that
\begin{equation}
\mathcal{R}\left( A_{4}\right) +\mathcal{R}\left( A_{5}\right) \subset
\overline{\mathcal{R}\left( A_{6}\right) }\;\text{and}\;int\left( \mathcal{R}%
\left( A_{4}\right) +\mathcal{R}\left( A_{5}\right) \right) \subset
int\left( \mathcal{R}\left( A_{6}\right) \right) .  \label{A4-A5}
\end{equation}%
Suppose now that $Z$ is a positive unit vector in $\mathcal{N}\left(
A_{4}\right) $ (recall that $A_{4}=A_{1}^{1}-\lambda I$)$,$ that is, $%
\lambda =\min \sigma \left( A_{4}\right) ,$ $A_{1}^{1}Z=\lambda Z,$ $%
\left\Vert Z\right\Vert _{\mathbb{X}_{2}}=1$ and $Z>0.$ For $F\in \mathbb{X}%
_{2},$ we have%
\begin{equation*}
F=\left[ F-\left\langle F,Z\right\rangle _{\mathbb{X}_{2}}Z\right]
+\left\langle F,Z\right\rangle _{\mathbb{X}_{2}}Z\in \mathcal{R}\left(
A_{4}\right) +\mathcal{R}\left( A_{5}\right) ,
\end{equation*}%
provided that%
\begin{equation*}
\alpha \left( -\infty \right) <\left\langle F,Z\right\rangle _{\mathbb{X}%
_{2}}Z<\alpha \left( +\infty \right)
\end{equation*}%
holds pointwise on $\overline{\Omega }$. But for, $\widetilde{F}=(\widetilde{%
f_{1}},\widetilde{f_{2}})^{T}\in \mathbb{X}_{2}$ and $\left\Vert F-%
\widetilde{F}\right\Vert _{\mathbb{X}_{2}}<\varepsilon ,$ we have again%
\begin{align}
\left\Vert \left\langle \widetilde{F},Z\right\rangle _{\mathbb{X}%
_{2}}Z-\left\langle F,Z\right\rangle _{\mathbb{X}_{2}}Z\right\Vert _{\mathbb{%
X}_{2}}& =\left\Vert \left\langle \widetilde{F}-F,Z\right\rangle _{\mathbb{X}%
_{2}}Z\right\Vert _{\mathbb{X}_{2}} \\
& \leq \left\Vert F-\widetilde{F}\right\Vert _{\mathbb{X}_{2}}<\varepsilon ,
\notag
\end{align}%
so then $\alpha \left( -\infty \right) <\left\langle \widetilde{F}%
,Z\right\rangle _{\mathbb{X}_{2}}Z<\alpha \left( +\infty \right) $ on $%
\overline{\Omega },$ for $\varepsilon >0$ small enough. It follows that%
\begin{equation*}
F\in int\left( \mathcal{R}\left( A_{4}\right) +\mathcal{R}\left(
A_{5}\right) \right) \subset int\left( \mathcal{R}\left( A_{6}\right)
\right) \subset \mathcal{R}\left( A_{6}\right) ,
\end{equation*}%
by \eqref{A4-A5}. This completes the proof of our theorem.
\end{proof}

\begin{remark}
When $\lambda =0$ and $Z\equiv \mathbf{1,}$ we have, using a different
normalization, $\left\Vert Z\right\Vert _{\mathbb{X}_{2}}^{2}=\mu \left(
\overline{\Omega }\right) =\lambda _{1}+\lambda _{2},$ $\min Z=\max Z=1;$ in
this case, it turns out that (\ref{Suf}) reduces to (\ref{4.13}).
\end{remark}

\begin{remark}
Of course the result in Theorem \ref{t4.8} is interesting only when
\begin{equation*}
\frac{\alpha \left( -\infty \right) }{\min Z}<\frac{\alpha \left( +\infty
\right) }{\max Z}.
\end{equation*}%
But this always holds unless $\alpha \equiv 0.$
\end{remark}

\begin{example}
In the context of Theorem \ref{t4.8}, let us now consider the one
dimensional problem:%
\begin{equation}
\left\{
\begin{array}{c}
-u^{^{\prime \prime }}+u+\alpha \left( u\right) =f_{1}\text{ in }\Omega
=\left( 0,1\right) , \\
-u\left( j\right) +\left( -1\right) ^{j+1}u^{^{\prime }}\left( j\right)
+c_{j}u\left( j\right) =f_{2}^{j}\text{, }j=0,1,%
\end{array}%
\right.  \label{P3}
\end{equation}%
where $c_{j}$ are given by (\ref{cc}) with $\delta =1/2.$ It follows from (%
\ref{Suf}) that, for (\ref{P3}) to have at least one solution, it suffices
to have%
\begin{equation}
\frac{\alpha \left( -\infty \right) }{\cos \left( 1/2\right) }%
<\int\limits_{0}^{1}f_{1}\left( x\right) \cos \left( x-1/2\right) dx+\left(
f_{2}^{0}+f_{2}^{1}\right) \cos \left( 1/2\right) <\alpha \left( +\infty
\right) .  \label{Suff}
\end{equation}

Moreover, choosing $\alpha \left( u\right) =r\left\vert u\right\vert
^{p-1}u, $ $r,p>0$ in the first equation of (\ref{P3}), then (\ref{Suff})
yields at least one solution to (\ref{P3}) for any $f_{1}\in L^{2}\left(
0,1\right) $ and $f_{2}^{j}\in \mathbb{R}$, $j=0,1.$
\end{example}

Finally, let us consider as an application of our main theorems, an example
for which $q>0,$ that is, $\Delta _{\Gamma }$ is present in the boundary
conditions for our nonlinear elliptic problems (\ref{P2}). For this purpose,
let $\Omega $ be the two dimensional box $\left( 0,1\right) ^{2}\subset
\mathbb{R}^{2}$, $b\left( x,y\right) \equiv 1,$ for all $\left( x,y\right)
\in \Gamma =\Gamma _{1}\cup \Gamma _{2}\cup \Gamma _{3}\cup \Gamma _{4}$, $%
q>0$ and $c_{i}\left( x,y\right) $ will be determined in the sequel. The
lines $\Gamma _{i}$ and $c_{i}$ will be defined below. We will choose $%
c_{i}\left( x,y\right) ,$ so that the smallest eigenvalue of $A_{1}^{1}$ is $%
\lambda =2$. The positive solution of $\Delta z+2z=0$ has the form $z\left(
x,y\right) =\cos \left( x-1/2\right) \cos \left( y-1/2\right) $ (times a
constant, which we take to be $1$). Note that $z\left( x,y\right) >0$ on $%
\overline{\Omega }=\left[ 0,1\right] ^{2}$. Thus, we need to choose positive
$c_{i}\left( x,y\right) $ for each $i=1,2,3,4$ such that $z\left( x,y\right)
$ satisfies the correct boundary conditions. The boundary conditions are%
\begin{equation}
\left\{
\begin{array}{c}
-2z-z_{y}+c_{1}\left( x,y\right) z-qz_{yy}=0\text{ for }\left( x,y\right)
\in \Gamma _{1}=\left\{ \left( x,0\right) :x\in \left[ 0,1\right] \right\} ,
\\
-2z+z_{x}+c_{2}\left( x,y\right) z-qz_{xx}=0\text{ for }\left( x,y\right)
\in \Gamma _{2}=\left\{ \left( 1,y\right) :y\in \left[ 0,1\right] \right\} ,
\\
-2z+z_{y}+c_{3}\left( x,y\right) z-qz_{yy}=0\text{ for }\left( x,y\right)
\in \Gamma _{3}=\left\{ \left( x,1\right) :x\in \left[ 0,1\right] \right\} ,
\\
-2z-z_{x}+c_{4}\left( x,y\right) z-qz_{xx}=0\text{ for }\left( x,y\right)
\in \Gamma _{4}=\left\{ \left( 0,y\right) :x\in \left[ 0,1\right] \right\} ,%
\end{array}%
\right.  \label{BCmany}
\end{equation}%
since $\partial /\partial n$ equals $\partial /\partial x$ and $\partial
/\partial y$ along the lines $\Gamma _{2}$ and $\Gamma _{3},$ respectively
and $\partial /\partial n$ equals $-\partial /\partial x$ and $-\partial
/\partial y$ along the lines $\Gamma _{4}$ and $\Gamma _{1},$ respectively.
Moreover, we note that $\Delta _{\Gamma }$ equals $\partial /\partial y^{2}$
along $\Gamma _{1}\cup \Gamma _{3}$ and $\partial /\partial x^{2}$ along $%
\Gamma _{2}\cup \Gamma _{4},$ respectively. Calculating in (\ref{BCmany}),
we obtain, for any $q\in \left( 0,q_{\pm }\right) ,$ $q_{\pm }=2\cos \left(
1/2\right) \pm \tan \left( 1/2\right) ,$ the functions%
\begin{equation}
\left\{
\begin{array}{c}
c_{1}\left( x,y\right) =q_{\pm }-q+d_{1}\left( y\right) , \\
c_{2}\left( x,y\right) =q_{\pm }-q+d_{2}\left( x\right) , \\
c_{3}\left( x,y\right) =q_{\pm }-q+d_{3}\left( y\right) , \\
c_{4}\left( x,y\right) =q_{\pm }-q+d_{4}\left( x\right) ,%
\end{array}%
\right.  \label{cccc}
\end{equation}%
where $d_{i}$ are nonnegative, continuous functions on $\left[ 0,1\right] $
such that $d_{1}\left( 0\right) =d_{4}\left( 0\right) =0$ and $d_{2}\left(
1\right) =d_{3}\left( 1\right) =0.$ Note that $c_{i}>0$ on $\Gamma _{i}$ for
each $i.$

\begin{example}
Let us now consider the boundary value problem in the open rectangle $\Omega
=\left( 0,1\right) ^{2}$:%
\begin{equation}
-\Delta u+2u+\alpha \left( u\right) =f_{1}\text{ in }\Omega ,  \label{P4}
\end{equation}%
endowed with the boundary conditions of (\ref{BCmany}), except that now the
zero values on the right sides of these equalities are replaced by the
functions $f_{2}^{1},$ $f_{2}^{2},$ $f_{2}^{3}$ and $f_{2}^{4},$
respectively. Let $c_{i}$ be the functions defined in (\ref{cccc}). It
follows from (\ref{Suf}) that for (\ref{P4}) to have at least one solution,
it suffices to have%
\begin{equation}
\frac{\alpha \left( -\infty \right) }{\cos ^{2}\left( 1/2\right) }<\mathcal{J%
}<\alpha \left( +\infty \right) ,  \label{last}
\end{equation}%
where%
\begin{equation*}
\mathcal{J}=\int\limits_{0}^{1}\int\limits_{0}^{1}f_{1}\left( x,y\right)
\cos \left( x-\frac{1}{2}\right) \cos \left( y-\frac{1}{2}\right)
dxdy+\sum\limits_{i=1}^{4}\int\limits_{\Gamma _{i}}f_{2}^{i}zdS_{i}
\end{equation*}%
and each $\int\limits_{\Gamma _{i}}dS_{i}$ denotes the path integral
corresponding to each line ${\Gamma _{i}}$. Moreover, choosing $\alpha
\left( u\right) =r\left\vert u\right\vert ^{p-1}u,$ $r,p>0$ in the (\ref{P4}%
), then (\ref{last}) yields at least one solution to (\ref{P4}), for any $%
f_{1}\in L^{2}\left( \Omega \right) $ and $f_{2}^{i}\in L^{2}\left( \Gamma
_{i}\right) $, $i=1,2,3,4.$
\end{example}

We conclude the paper by stating sufficient conditions for \eqref{resl-BH-2}
(see Theorem \ref{T2}) to hold. It is worth mentioning, however, that such
conditions are not necessary to prove Theorem \ref{t4.4}, but that the
results below have an interest on their own. We consider the following
growth conditions for a function $\alpha :\mathbb{R}\rightarrow \mathbb{R}$:

\textbf{(GC1) }$N=1$. No growth condition on $\alpha .$

\ \ \ \ \ \ \ \ \ \ $\ N=2.$ The function $\alpha $ is bounded by a power:%
\begin{equation}
\left\vert \alpha \left( s\right) \right\vert \leq C\left( 1+\left\vert
s\right\vert ^{r}\right) ,\text{ for all }s\in \mathbb{R}\text{,}  \label{p1}
\end{equation}%
where $C,$ $r$ are positive constants.

\ \ \ \ \ \ \ \ \ \thinspace $\ N=3$. (\ref{p1}) holds with $r=N/\left(
N-2\right) .$

\textbf{(GC2) }This is (GC1), modified by replacing $r=N/\left( N-2\right) $
by $r=\left( N-1\right) /\left( N-2\right) $ in the case $N\geq 3$ and $q>0,$
and replacing $r=N/\left( N-2\right) $ by%
\begin{equation*}
r=\left\{
\begin{array}{c}
\text{any number, if }N=3 \\
\frac{N-1}{N-3}\text{, if }N\geq 4.%
\end{array}%
\right.
\end{equation*}%
We start with the following.

\begin{proposition}
\label{theo-equ} Let $\alpha _{1},$ $\alpha _{2}:\mathbb{R}\rightarrow
\mathbb{R}$ satisfy \textbf{(H1)}. Assume that%
\begin{equation}
(\alpha _{1}(u),\alpha _{2}(u))^{T}\in \mathbb{X}_{2},\;\text{for all}\;u\in
H^{1}\left( \Omega \right) ,\;\text{if }q=0,  \label{L2-cond}
\end{equation}%
\begin{equation}
(\alpha _{1}(u),\alpha _{2}(u_{\mid \Gamma }))^{T}\in \mathbb{X}_{2},\;\text{%
for all }(u,tr\left( u\right) )^{T}\in H^{1}(\Omega )\times H^{1}(\Gamma ),\;%
\text{if}\;q>0.  \label{L2-cond-1}
\end{equation}%
Let $A_{1}$, $A_{2}$ and $A_{3}$ be as in the proof of Theorem \ref{t4.4}.
Then%
\begin{equation}
A_{1}+A_{2}=A_{3}\;\text{and}\;\mathcal{R}(A_{1})+\mathcal{R}(A_{2})\simeq
\mathcal{R}(A_{3}).  \label{conc}
\end{equation}
\end{proposition}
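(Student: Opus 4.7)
The plan is to reduce Proposition~\ref{theo-equ} to the operator identity $A_1+A_2=A_3$; the second assertion $\mathcal{R}(A_1)+\mathcal{R}(A_2)\simeq\mathcal{R}(A_3)$ then follows immediately from the last sentence of Theorem~\ref{T2}, since $A_3=\partial J$ is maximal monotone by Minty's theorem and $A_i=\partial J_i$ for $i=1,2,3$ under our assumptions (for $i=2$ one checks that \eqref{L2-cond}--\eqref{L2-cond-1} enforces the Nemitskii-type growth needed for \textbf{(H2)}, so the lemma preceding Theorem~\ref{t4.1} applies). Because the domain and action of $A_3$ are pointwise specified in \eqref{J-1}--\eqref{J-2}, the identity $A_1+A_2=A_3$ can be verified by direct comparison with the explicit formulas for $A_1$ and $A_2$. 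The main obstacle will be the inclusion $D(A_3)\subset D(A_1)$, which is an $H^1$-to-$H^2$ regularity upgrade relying on both \eqref{L2-cond}--\eqref{L2-cond-1} and the elliptic regularity of Theorem~\ref{t3.2}.

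For the routine inclusion $A_1+A_2\subset A_3$, I take $U=(u,tr(u))^T\in D(A_1)\cap D(A_2)$. Then $u\in H^2(\Omega)$ (with $tr(u)\in H^2(\Gamma)$ if $q>0$) by Theorem~\ref{t3.2}, and $(\alpha_1(u),\alpha_2(u))^T\in\mathbb{X}_2$ by definition of $D(A_2)$. Using the elementary bound $0\le L_i(t)\le |t||\alpha_i(t)|$, valid since $\alpha_i(0)=0$ and $\alpha_i$ is nondecreasing, together with its counterpart at $-t$, gives $\Lambda_i(t)\le |t|(|\alpha_i(t)|+|\alpha_i(-t)|)$; combined with Cauchy--Schwarz and \eqref{L2-cond} applied to $\pm u\in H^1$, this yields $\int_\Omega\Lambda_1(u)dx+\int_\Gamma\Lambda_2(u)\,dS/b<\infty$, so $U\in\mathbb{D}_q$. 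Since $\Delta u$ and $\alpha_1(u)$ are separately in $L^2(\Omega)$, their sum is in $L^2(\Omega)$, and analogously for the boundary expression. Therefore $U\in D(A_3)$, and \eqref{J-2} gives $A_3U=A_1U+A_2U$.

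The core step is the reverse inclusion $A_3\subset A_1+A_2$. Let $U\in D(A_3)$ with $F:=A_3U=(f_1,f_2)^T\in\mathbb{X}_2$. From \eqref{J-1}, $U\in\mathbb{D}_q$, so $u\in H^1(\Omega)$ (and $tr(u)\in H^1(\Gamma)$ when $q>0$). Hypothesis \eqref{L2-cond} (resp.\ \eqref{L2-cond-1}) gives $(\alpha_1(u),\alpha_2(tr(u)))^T\in\mathbb{X}_2$, so $U\in D(A_2)$. Subtracting $\alpha_i$ from the equations characterizing $U\in D(\partial J)$, and using $c\equiv 0$, produces
\begin{equation*}
-\Delta u=f_1-\alpha_1(u)\in L^2(\Omega),\qquad b\,\partial_n u-qb\,\Delta_\Gamma u=f_2-\alpha_2(u)\in L^2(\Gamma),
\end{equation*}
which are precisely the equations defining the extended Wentzell Laplacian $A_1$ applied to $U$. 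Theorem~\ref{t3.2} then promotes $u$ to $H^2(\Omega)$ (to $H_*^2(\Omega)$ if $q>0$), establishing $U\in D(A_1)$. The explicit formulas for $A_3U$ and $A_1U+A_2U$ now coincide, completing the proof of $A_1+A_2=A_3$; the range statement in \eqref{conc} follows via Theorem~\ref{T2} as indicated above.
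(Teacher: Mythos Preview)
Your overall strategy matches the paper's: show $A_1+A_2\subset A_3$, then prove $D(A_3)\subset D(A_1)\cap D(A_2)$ by using \eqref{L2-cond}--\eqref{L2-cond-1} to peel off the $\alpha_i$-terms and upgrading $u$ from $H^1$ to $H^2$ (and $tr(u)$ to $H^2(\Gamma)$ when $q>0$); the range statement then comes from \eqref{resl-BH-2} once $A_1+A_2=A_3=\partial J$ is maximal monotone.

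Two remarks on details. First, your parenthetical claim that \eqref{L2-cond}--\eqref{L2-cond-1} ``enforces the Nemitskii-type growth needed for \textbf{(H2)}'' is not correct: for $N=1$ one has $H^1(\Omega)\hookrightarrow C(\overline\Omega)$, so \eqref{L2-cond} holds for \emph{every} continuous $\alpha_i$, in particular for $\alpha_i(t)=e^t-1$, whose $\Lambda_i$ plainly fails the $\triangle_2$-condition. The paper does not attempt this implication; for the forward inclusion it simply uses the general fact $\partial J_1+\partial J_2\subset\partial(J_1+J_2)=A_3$. Your direct verification via \eqref{J-1}--\eqref{J-2} is a legitimate alternative and in fact sidesteps the need for $A_2=\partial J_2$ at that stage. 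Second, for the regularity step the paper does not cite Theorem~\ref{t3.2}; it argues in two stages: from $u\in H^1(\Omega)$, $\Delta u\in L^2(\Omega)$ and $\partial_n u\in L^2(\Gamma)$ one gets $u\in H^2(\Omega)$ by standard elliptic regularity, and then, when $q>0$, the boundary equation gives $\Delta_\Gamma u\in L^2(\Gamma)$, whence $tr(u)\in H^2(\Gamma)$ by elliptic regularity on $\Gamma$. Your single appeal to Theorem~\ref{t3.2} reaches the same conclusion but is a slightly different packaging.
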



\begin{proof}
Let us first recall that, from Theorem \ref{t3.2}, $D(A_{1})$ equals either $%
H^{2}(\Omega )$ or $H_{\ast }^{2}(\Omega ),$ according to whether $q=0$ or $%
q>0$. Moreover,%
\begin{equation*}
A_{1}U=\left( -\Delta u,b(x)\partial _{n}u-qb(x)\Delta _{\Gamma }u\right)
^{T}.
\end{equation*}%
The operators $A_{2},$ $A_{3}$ are given in \eqref{4.7} and \eqref{J-1}-%
\eqref{J-2}, respectively. Since $A_{1}=\partial J_{1}$, $A_{2}=\partial
J_{2}$ and $A_{3}=\partial J_{3}:=\partial (J_{1}+J_{2})$ with $D(J_{1})\cap
D(J_{2})\neq \emptyset $, it follows that $A_{1}+A_{2}\subset A_{3}.$ Hence,
$D\left( A_{1}\right) \cap D\left( A_{2}\right) \subset D\left( A_{3}\right)
$. We claim that $A_{3}=A_{1}+A_{2}$. To show this we must prove%
\begin{equation*}
D\left( A_{3}\right) \subset D\left( A_{1}\right) \cap D\left( A_{2}\right) .
\end{equation*}%
Assume \eqref{L2-cond} and let $U=(u,u_{\mid \Gamma })^{T}\in D(A_{3})$.
Then $U\in \mathbb{D}_{0}$, and from (\ref{J-1}),%
\begin{equation*}
-\Delta u+\alpha _{1}(u)\in L^{2}(\Omega ),\text{ }\frac{\partial u}{%
\partial n}+\alpha _{2}(u)\in L^{2}(\Gamma )\text{, if }q=0.
\end{equation*}%
Therefore, $u\in H^{1}(\Omega )$, $\Delta u\in L^{2}(\Omega )$ and $\frac{%
\partial u}{\partial n}\in L^{2}(\Gamma )$. Since $\Omega $ is smooth,
elliptic regularity implies that $u\in H^{2}(\Omega )$. Hence, $U\in D\left(
A_{1}\right) \cap D\left( A_{2}\right) ,$ if $q=0$. If $q>0$, one also has
that $\frac{\partial u}{\partial n}-qb(x)\Delta _{\Gamma }u+\alpha
_{2}(u)\in L^{2}(\Gamma )$ and $tr\left( u\right) \in H^{1}(\Gamma )$. Since
$u\in H^{2}(\Omega )$, and $\alpha _{2}(u)\in L^{2}(\Gamma ),$ by %
\eqref{L2-cond-1}, we also have that $\Delta _{\Gamma }u\in L^{2}(\Gamma )$.
Elliptic regularity also implies that $tr\left( u\right) \in H^{2}(\Gamma )$%
. Hence, $U\in D\left( A_{1}\right) \cap D\left( A_{2}\right) ,$ if $q>0$.
It is easy to verify that, for every $U\in D\left( A_{3}\right) =D\left(
A_{1}\right) \cap D\left( A_{2}\right) $, $A_{3}U=A_{1}U+A_{2}U$. The
statement (\ref{conc}) is a straightforward consequence of \eqref{resl-BH-2}%
. The proof is finished.
\end{proof}

The following corollary is a consequence of Proposition \ref{theo-equ}.

\begin{corollary}
Let $\alpha _{1},$ $\alpha _{2}:\mathbb{R}\rightarrow \mathbb{R}$ be
continuous, monotone nondecreasing functions satisfying the growth
conditions \textbf{(GC1)}-\textbf{(GC2)}. Then \eqref{L2-cond}-%
\eqref{L2-cond-1} are fulfilled and therefore, (\ref{conc}) holds.
\end{corollary}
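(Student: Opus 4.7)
The plan is to verify conditions \eqref{L2-cond} and \eqref{L2-cond-1} directly from the growth hypotheses, and then invoke Proposition \ref{theo-equ} to conclude \eqref{conc}. The entire argument reduces to bookkeeping with Sobolev embeddings in $\Omega$ and on $\Gamma$; the growth exponents in \textbf{(GC1)}--\textbf{(GC2)} are precisely tailored to the critical Sobolev indices.

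First I would handle $\alpha_1(u)\in L^2(\Omega)$ for arbitrary $u\in H^1(\Omega)$. Under \textbf{(GC1)} we have $|\alpha_1(u)|^2\le 2C^2(1+|u|^{2r})$, so it suffices to show $u\in L^{2r}(\Omega)$. For $N=1$, $H^1(\Omega)\hookrightarrow L^\infty(\Omega)$, so no growth restriction is needed. For $N=2$, $H^1(\Omega)\hookrightarrow L^p(\Omega)$ for every $p<\infty$, covering any power $r$. For $N\ge 3$, the critical Sobolev embedding gives $H^1(\Omega)\hookrightarrow L^{2N/(N-2)}(\Omega)$, so the choice $r=N/(N-2)$ in \textbf{(GC1)} is exactly what is needed to land $|u|^r$ in $L^2(\Omega)$.

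Next I would treat $\alpha_2(u_{\mid\Gamma})\in L^2(\Gamma,dS/b)$, which, since $b$ is bounded above and below by positive constants, is equivalent to membership in $L^2(\Gamma,dS)$. The same bound $|\alpha_2(s)|^2\le 2C^2(1+|s|^{2r})$ reduces matters to $u_{\mid\Gamma}\in L^{2r}(\Gamma)$. For $q=0$, I use the trace theorem $H^1(\Omega)\to H^{1/2}(\Gamma)$ together with the Sobolev embedding $H^{1/2}(\Gamma)\hookrightarrow L^{2(N-1)/(N-2)}(\Gamma)$ when $N\ge 3$, matching the exponent $r=(N-1)/(N-2)$ provided by \textbf{(GC2)}; the cases $N=1,2$ are immediate. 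For $q>0$, $tr(u)\in H^1(\Gamma)$, and since $\Gamma$ is an $(N-1)$-dimensional closed manifold, the Sobolev embedding yields $H^1(\Gamma)\hookrightarrow L^p(\Gamma)$ for all $p<\infty$ when $N=3$ (so any $r$ works), and $H^1(\Gamma)\hookrightarrow L^{2(N-1)/(N-3)}(\Gamma)$ when $N\ge 4$, which again matches the exponents allowed by \textbf{(GC2)}.

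With both \eqref{L2-cond} and \eqref{L2-cond-1} verified, Proposition \ref{theo-equ} applies verbatim and yields \eqref{conc}. The main obstacle, if any, is purely notational: one must keep track of which Sobolev scale ($H^1(\Omega)$, $H^{1/2}(\Gamma)$, or $H^1(\Gamma)$) is relevant and match it against the correct clause of \textbf{(GC1)} or \textbf{(GC2)}, but no analytical subtlety beyond the embeddings themselves is involved.
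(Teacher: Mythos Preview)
Your proposal is correct and follows essentially the same route as the paper: list the relevant Sobolev embeddings for $H^1(\Omega)$ and for the boundary space, use the polynomial growth bound $|\alpha_i(s)|\le C(1+|s|^r)$ to reduce $\alpha_i(u)\in L^2$ to $u\in L^{2r}$, and then invoke Proposition~\ref{theo-equ}. If anything, you are more explicit than the paper in treating the $q=0$ boundary case via the trace embedding $H^1(\Omega)\to H^{1/2}(\Gamma)\hookrightarrow L^{2(N-1)/(N-2)}(\Gamma)$, whereas the paper's proof lists only the $H^1(\Gamma)$ embeddings on $\Gamma$ and leaves that step implicit.
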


\begin{proof}
To prove this result, we need the following properties of Sobolev spaces.
Since the domain $\Omega $ has smooth boundary $\Gamma $, one has the
following:

\begin{enumerate}
\item If $N=1$, $H^1(\Omega)\hookrightarrow C(\bar\Omega)$.

\item If $N=2$, $H^{1}(\Omega )\hookrightarrow L^{p}(\Omega ),$ for every $%
p\in \lbrack 1,\infty )$ and $H^{1}(\Gamma )\hookrightarrow C(\Gamma )$.

\item If $N\ge 3$, $H^1(\Omega)\hookrightarrow L^{\frac{2N}{N-2}}(\Omega)$.

\item If $N=3$, $H^{1}(\Gamma )\hookrightarrow L^{q}(\Gamma ),$ for every $%
q\in \lbrack 1,\infty )$.

\item If $N\ge 4$, $H^1(\Gamma)\hookrightarrow L^{\frac{2(N-1)}{N-3}%
}(\Gamma) $.
\end{enumerate}

Now, let $\widetilde{\Omega }$ denote either $\Omega $ or $\Gamma $ and
suppose that $q\geq 0$. Then the regularity properties of $u\in $ $%
H^{1}\left( \Omega \right) ,$ if $q=0,$ $u_{\mid \Gamma }\in H^{1}\left(
\Gamma \right) ,$ if $q>0$ given in the five points above, and $\left\vert
\alpha \left( s\right) \right\vert \leq C\left( 1+\left\vert s\right\vert
^{r}\right) $ imply that $\alpha \left( u\right) \in L^{2}(\widetilde{\Omega
}),$ provided that \textbf{(GC1)}-\textbf{(GC2)} are satisfied. In
particular, it is easy to check that $\alpha _{i}\left( u\right) \in L^{2}(%
\widetilde{\Omega }),$ for $i=1,2$. This completes the proof.
\end{proof}

\noindent \textbf{Acknowledgement.} We are most grateful to Haim Brezis for
his interest in this work and for his generous and helpful comments. We also
thank Jean Mawhin for informing us about \cite{Ma}. 

\end{document}